\documentclass[11pt]{amsart}
\usepackage[a4paper,margin=1in]{geometry}
\usepackage{amsmath,amssymb,amsthm,mathtools}
\usepackage[T1]{fontenc}
\usepackage{lmodern}
\usepackage{microtype}
\usepackage[colorlinks=true,linkcolor=blue,citecolor=blue,urlcolor=blue]{hyperref}
\usepackage{enumitem}
\usepackage{booktabs}
\usepackage{longtable}
\usepackage{array}
\usepackage{algorithm}
\usepackage{algpseudocode}

\newtheorem{theorem}{Theorem}[section]
\newtheorem{proposition}[theorem]{Proposition}
\newtheorem{lemma}[theorem]{Lemma}
\newtheorem{corollary}[theorem]{Corollary}
\newtheorem{conjecture}[theorem]{Conjecture}
\theoremstyle{definition}
\theoremstyle{remark}\newtheorem{remark}[theorem]{Remark}

\newcommand{\Pic}{\operatorname{Pic}}
\newcommand{\Isom}{\operatorname{Isom}}
\newcommand{\Aut}{\operatorname{Aut}}
\newcommand{\CAut}{\operatorname{CAut}}
\newcommand{\rk}{\operatorname{rk}}
\newcommand{\Lie}{\operatorname{Lie}}
\newcommand{\tr}{\operatorname{tr}}
\newcommand{\Sym}{\operatorname{Sym}}
\newcommand{\Sp}{\operatorname{Sp}}
\newcommand{\ch}{\operatorname{ch}}
\newcommand{\Herm}{\operatorname{Herm}}
\newcommand{\ind}{\operatorname{ind}}
\newcommand{\td}{\operatorname{td}}
\newcommand{\SO}{\operatorname{SO}}
\newcommand{\End}{\operatorname{End}}
\newcommand{\vol}{\operatorname{vol}}
\newcommand{\Ext}{\operatorname{Ext}}
\newcommand{\Hom}{\operatorname{Hom}}
\newcommand{\CC}{\mathbf C}
\newcommand{\RR}{\mathbf R}

\newcommand{\HH}{\mathbf H}
\newcommand{\EE}{\mathbf E}
\newcommand{\Ahat}{\widehat A}

\newcommand{\CertificateRepository}{https://github.com/agohr/ls_dim_14}

\title{The LeBrun--Salamon conjecture is true in dimensions up to $56$}

\author{Jordi Daura Serrano}
\address{J. Daura Serrano: Department of Mathematics and Statistics, University of Cyprus, Nicosia 1678, Cyprus}
\email{daura-serrano.jordi@ucy.ac.cy}

\author{Aron Gohr}
\address{Aron Gohr: Department of Mathematics, South Kensington Campus,
Imperial College London, 
London, SW7 2AZ, UK}
\email{aron.gohr@gmail.com}

\author{Marie-Am\'{e}lie Lawn}
\address{M.-A.~Lawn: Department of Mathematics, 
South Kensington Campus,
Imperial College London, 
London, SW7 2AZ, UK}
\email{m.lawn@imperial.ac.uk}

\author{Travis Schedler}
\address{T. Schedler: Department of Mathematics, South Kensington Campus,
Imperial College London, 
London, SW7 2AZ, UK}
\email{t.schedler@imperial.ac.uk}

\date{5 October 2026}

\subjclass[2020]{Primary 53C26; Secondary 14J42, 53C28, 53C24}
\keywords{Quaternionic K\"ahler manifolds, twistor spaces, contact manifolds, symplectic singularities, hyperk\"ahler cone metrics, 3-Sasakian manifolds, LeBrun--Salamon conjecture}

\begin{document}

\begin{abstract}
We prove the LeBrun--Salamon conjecture for positive quaternionic K\"ahler manifolds of quaternionic dimension $n\le 14$.
To do so, we first show that the isometry group $\Isom(M)$ of any minimal counterexample $M$ has rank at most one, hence dimension at most three.  On the other hand, we consider a characteristic number  which equals $\dim \Isom(M) - \delta_n$, where $\delta_n = n+3$ if $n$ is even and $\delta_n = n+1$ if $n$ is odd, and show that this index is a sum of positive quantities when $n \leq 14$, yielding a contradiction.

The toolset used to show positivity enlarges in stages. Projection curvature moments suffice through $n=8$; positive-semidefinite curvature moments extend the
argument through $n=10$; and signed scalar orbital moments give the new inequalities needed for $n=11,12$. Scalar orbital moments alone fail at $n=13$; combining them with Hodge--Riemann positivity for squares of cohomology classes extends the proof to $n=13,14$.
We give exact separating obstructions explaining the changes of method.  
\end{abstract}
\maketitle

\section{Introduction}

\subsection{The LeBrun--Salamon conjecture}
Quaternionic K\"ahler manifolds form one of the distinguished classes in Berger's classification of irreducible Riemannian holonomy groups. A Riemannian manifold $(M^{4n},g)$, with $n\geq2$, is called quaternionic K\"ahler if its holonomy group is contained in $\Sp(n)\Sp(1)\subseteq\SO(4n)$, or equivalently, if there is a rank three subbundle $Q \subseteq \End(TM)$ of endomorphisms, parallel for the Levi-Civita connection, which together with the identity span quaternion algebras $\mathbb{H}$ at every point. Note that in dimension four this holonomy condition is vacuous, since $\Sp(1)\Sp(1)\cong\SO(4)$. One instead requires $g$ to be Einstein and the conformal curvature to be self-dual; equivalently, we require the almost complex structure on the twistor space of $M$ to be integrable.

Quaternionic K\"ahler manifolds are Einstein, and those with positive scalar curvature are called positive quaternionic K\"ahler manifolds. The compact simply connected quaternionic K\"ahler symmetric spaces were classified by Wolf \cite{Wolf1965} and are known as Wolf spaces. They provide all currently known examples of complete positive quaternionic Kähler manifolds. Indeed, the LeBrun--Salamon conjecture asserts: 

\begin{conjecture}
Every connected complete positive quaternionic K\"ahler manifold is, up to rescaling, a Wolf space.
\end{conjecture}

\subsection{Interpretation via symplectic singularities, contact vector fields, and 3-Sasakian manifolds}
Complete positive quaternionic K\"ahler manifolds are closely related to 3-Sasakian spaces and to symplectic singularities. Given a positive quaternionic K\"ahler manifold $M$, we can form its twistor space, $Z := \{(x,J) \in Q \mid J^2 = -\operatorname{Id}_{T_x M}\}$. It is canonically a projective Fano variety with a contact structure, having ample contact line bundle $L$.  The affine cone
$\operatorname{Spec} \bigoplus_{m \geq 0} H^0(Z,L^m)$  over $Z$ is a conical symplectic variety $X$ in the sense of Beauville \cite{Beauville2000} with an isolated singularity and whose contracting $\CC^\times$ action is free away from the vertex. Moreover, the Einstein metric on $M$ induces a K\"ahler--Einstein structure on $Z$ and a hyperk\"ahler  metric on $X\setminus\{0\}$, compatible with the conical structure of $X$. Conversely, any such $X$ is obtainable, by the twistor correspondence, from a complete positive quaternionic K\"ahler manifold $M$ \cite{Swann1991, LeBrun1995}.  When $M$ is a Wolf space, $X$ is the minimal nilpotent orbit closure in the corresponding simple complex Lie algebra \cite{Wolf1965, Beauville1998}. Thus the LeBrun--Salamon conjecture is equivalent to the statement that an isolated hyperk\"ahler cone $X$ with free $\CC^\times$ action away from the vertex is a minimal nilpotent orbit closure.  By Namikawa \cite{Namikawa2018}, a singular symplectic cone is  a normal nilpotent orbit closure if and only if the global functions on $X$ are generated in degree one. In the isolated setting, such a nilpotent orbit closure is the minimal nonzero one in a simple complex Lie algebra.
So the LeBrun--Salamon conjecture can alternatively be stated as saying that, for every complete positive quaternionic K\"ahler manifold, $H^0(Z,L)$ generates the algebra $\bigoplus_{n \geq 0} H^0(Z,L^{\otimes n})$ of functions on $X$. Note that $H^0(Z,L)$ identifies with the contact vector fields on $Z$ \cite[Proposition~1.1]{Beauville1998}; so the conjecture can be viewed loosely as saying that complete positive quaternionic K\"ahler manifolds always have a generating supply of contact automorphisms.

These considerations are also closely related to complete 3-Sasakian manifolds. Namely, when one of the canonical $S^1$ actions is free, the manifold is a principal $S^1$ bundle over the twistor space $Z$  of a positive quaternionic K\"ahler manifold \cite{BoyerGalicki1997}. So  the LeBrun--Salamon conjecture can also be viewed as saying that every  complete 3-Sasakian manifold with free canonical $S^1$ action is one of the homogeneous examples associated with Wolf spaces.

 \subsection{Toward the proof of the conjecture: main and previous results}
The goal of this article is to prove the LeBrun--Salamon conjecture in quaternionic dimensions up to $14$.

\begin{theorem}\label{thm:main}
Every connected complete positive quaternionic K\"ahler manifold of quaternionic dimension $n\leq14$ is, up to rescaling, a Wolf space.
\end{theorem}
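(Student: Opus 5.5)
The plan follows the strategy sketched in the abstract: argue by contradiction with a minimal counterexample, and play a symmetry bound off against an index computation. Let $M$ be a positive quaternionic K\"ahler manifold of quaternionic dimension $n\le 14$ that is not a Wolf space, with $n$ minimal among all counterexamples. The known low-dimensional cases (Poon--Salamon, LeBrun--Salamon, Herrera--Herrera, Buczy\'nski--Wi\'sniewski--Weber) serve as the base of the induction. We normalise the metric and pass to the twistor space $Z$ with contact line bundle $L$.

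\textbf{Step 1: bound the isometry rank.} Suppose a torus $T^2\subseteq\Isom(M)$ acts. We study the fixed-point components of circle subgroups. Each such component is either quaternionic K\"ahler of smaller dimension, or a K\"ahler submanifold whose twistor lift is handled by the contact-geometric results of Fang, Bielawski and Frank--Rehn. By minimality of $n$, every quaternionic fixed component is a Wolf space. Combining this with the connectedness lemma and the weight analysis at $T$-fixed points in $Z$, in the style of Wi\'sniewski--Buczy\'nski and Fang's rank bounds, we would force $M$ to be symmetric. This contradicts the choice of $M$, so $\rk\Isom(M)\le 1$ and hence $\dim\Isom(M)\le 3$.

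\textbf{Step 2: express $\dim\Isom(M)$ as an index.} Salamon's vanishing theorems give
\[
\dim\Isom(M)=h^0(Z,L)=\chi(Z,L)=\int_Z \ch(L)\td(Z),
\]
and the twistor integration rewrites this as a characteristic number on $M$. The same vanishing gives further identities $\chi(Z,L^{-k})=0$ for suitable $k$, and $\ind$ of twisted Dirac operators vanishes, including $\Ahat(M)=0$ for $n$ even and analogues for $n$ odd. We then subtract suitable multiples of these vanishing indices to get a rational combination
\[
I_n=\dim\Isom(M)-\delta_n
\]
of Pontryagin numbers and powers of the class $u=c_2$ of the $\Sp(1)$-bundle. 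The claim to establish is that $I_n$ can be rewritten as a nonnegative combination of quantities that are provably positive, and strictly positive in total. This gives $\dim\Isom(M)\ge\delta_n+\varepsilon>3$ for $n\ge 1$, contradicting Step 1.

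\textbf{Step 3: positivity certificates.} This is the main obstacle. The plan is to express the Pontryagin forms pointwise as $\Sp(n)$-invariant polynomials in the curvature $R=\nu R_0+R_{\mathrm{hyp}}$, where the hyperk\"ahler part $R_{\mathrm{hyp}}$ lies in $S^4E$. We then search, by linear or semidefinite programming with exact rational certificates, for a decomposition of the integrand of $I_n$ into the following pieces.
\begin{enumerate}[label=(\roman*)]
\item Squared norms of projections of $R_{\mathrm{hyp}}$ ("projection moments"), which we expect to suffice for $n\le 8$.
\item Traces of positive semidefinite Gram matrices of curvature contractions, for $n\le 10$.
\item Integrated scalar orbital moments, which are nonnegative because they are averages over $\Sp(1)$-orbits of positive functions, for $n=11,12$.
\item For $n=13,14$, additional global inequalities from Hodge--Riemann bilinear relations on $Z$ with respect to the K\"ahler class $c_1(L)$, namely $\int_Z \alpha^2 c_1(L)^{2n-3}\ge0$ for suitable primitive classes $\alpha$, translated to $M$.
\end{enumerate}
We would first attempt the pure pointwise certificate, dimension by dimension. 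Where the LP reports infeasibility, we would extract a dual separating functional, namely a hypothetical curvature moment vector satisfying all the known inequalities but with $I_n\le 0$. That functional tells us which new class of inequality is required. The key difficulty is that the cone of curvature moments grows with $n$, and the known inequalities stop separating at $n=13$. So the Hodge--Riemann input must be chosen carefully, with classes such as $p_1-\lambda u$, so that its combination with the orbital moments closes the gap for $n=13,14$. Finally, we would verify each certificate in exact rational arithmetic, which completes the contradiction and proves Theorem~\ref{thm:main}.
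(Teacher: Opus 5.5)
Your overall architecture matches the paper: a minimal counterexample, a rank bound, the index $d-\delta_n$, and staged positivity certificates. There are, however, two genuine gaps.

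\textbf{Step~1 contains no argument yet.} Connectedness-lemma arguments of Fang's type give the Wolf conclusion only when the rank is of order $n/2$. Your sketch does not say how minimality upgrades this to $\rk\Isom(M)\le1$. The missing mechanism is an extremal-weight argument for a \emph{maximal} torus $T_K$ of rank $r\ge2$; an arbitrary $T^2$ is not enough.
\begin{itemize}
\item By Nitta--Takeuchi, $\CAut_0(Z)\cong\Isom_0(M)_{\CC}$. Hence $T=(T_K)_{\CC}$ is a maximal torus of this group, and every nonzero weight space of $H^0(Z,L)\cong\mathfrak g$ has dimension at most one.
\item Let $Y\subseteq Z^T$ be a fixed component whose weight $\mu(Y)$ is a vertex of the weight polytope. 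Then $\mu(Y)\ne0$, and restriction $H^0(Z,L)_{\mu(Y)}\to H^0(Y,L)$ is an isomorphism (Buczy\'nski--Wi\'sniewski--Weber). So $h^0(Y,L)\le1$.
\item The corank-one subtorus $S=(\ker\mu(Y))_0$ acts trivially on $L_y\cong T_yZ_x$, hence on $Q$ along the component $N\supseteq\pi(Y)$ of $M^S$. So $N$ is automatically quaternionic K\"ahler (Goertsches--Loiudice), and no analysis of K\"ahler fixed components is needed.
\item By minimality, $N$ is a point or a Wolf space. In the latter case $L|_Y$ is very ample, so $h^0(Y,L)\ge\dim Y+1$, which forces $Y$ to be a point.
\item With all extremal components points, the recognition theorem of Occhetta--Romano--Sol\'a Conde--Wi\'sniewski makes $Z$ an adjoint variety. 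It is applied with $\Pic Z=\mathbb Z\cdot L$, which follows from $b_2(M)=0$ and Kobayashi--Ochiai.
\end{itemize}

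\textbf{The Hodge--Riemann input in (iv) cannot close the gap.} Your exponent $2n-3$ forces $\alpha$ to have degree four, e.g.\ $\alpha=\pi^*(p_1-\lambda u)$. Then $\int_Z\alpha^2c_1(L)^{2n-3}$ is a positive multiple of $\int_M(p_1-\lambda u)^2u^{n-2}$, which has Weyl weights $0,1,2$ only.

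The obstruction at $n=13,14$ sits at Weyl weight four. The functionals $L_{13},L_{14}$ of Appendix~\ref{app:obstruction-orbital} are nonnegative on every weight-four scalar orbital moment but negative on $P_{n,4}$. Orbital moments are homogeneous in Weyl weight, so any identity built from them plus squares not reaching weight four must match $P_{n,4}$ by orbital terms alone, which is impossible. Squares of degree-$8$ classes fail too: their weight-four part is $(c_1p_1^2+c_2p_2)^2$, on which both functionals are positive definite.

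What works is the square of a class in $H^{12}(M)$ mixing Weyl weights one and three, $a=\alpha u^2p_1+\beta p_3+\gamma p_1p_2+\delta p_1^3$.
\begin{itemize}
\item Its cross term $2\alpha p_1(\beta p_3+\gamma p_1p_2+\delta p_1^3)$ points to the negative side of the separator; for instance $L_{13}(2p_1p_3)<0$.
\item Its weight-two and weight-six parts are reabsorbed by the orbital decompositions in those weights.
\item The inequality $\int_Ma^2u^{n-6}\ge0$ comes from Fujiki's positive-definiteness of the quaternionic intersection form on all of $H^{12}(M)$, not only on primitive classes.
\end{itemize}

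\textbf{Step~3 remains a search plan rather than a proof.} The actual content is the explicit exact certificates. They rest on the pointwise lemma $(\tr(AY))^k\wedge\Omega^{n-k}\ge0$ for $A\succeq0$, which comes from $(-1)^k\theta^{2k}\wedge\Omega^{n-k}\ge0$ for $\theta\in\mathfrak{sp}(n)$. The orbital averages are over $\Sp(n)$, not $\Sp(1)$. They also depend on choosing $V_n$ in Step~2 so that $\ch(V_n)$ is divisible by $u^{\lfloor n/2\rfloor+1}$; this caps the Weyl weight at $\lceil n/2\rceil-1$.
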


The conjecture was previously established for $n\leq4$: by Hitchin \cite{Hitchin1981} and independently by Friedrich and Kurke  \cite{FriedrichKurke1982} for $n=1$, Poon and Salamon \cite{PoonSalamon1991} for $n=2$, and Buczy\'nski and Wi\'sniewski, with an appendix by Weber \cite{BuczynskiWisniewskiWeber2022}, for $n=3,4$.

Several classification results are also known under additional topological or symmetry assumptions. Fang \cite{Fang2008} proved that $M$ is a Wolf space if $\rk\Isom(M)\geq n/2+3$. Occhetta, Romano, Sol\'a Conde and Wi\'sniewski \cite{OcchettaRomanoSolaWisniewski2021} improved this bound to $\rk\Isom(M)\geq\max\{2,(n-3)/2\}$. On the topological side, the work of Galicki--Salamon and Amann \cite{GalickiSalamon1996,Amann2011} shows that $b_4(M)=1$ characterizes $\HH P^n$ among positive quaternionic K\"ahler manifolds with $3\leq n\leq6$. Kim and Lee \cite{KimLee2012} obtain the same conclusion for $n\geq5$ under the combined assumptions $b_4(M)=1$ and $\rk\Isom(M)\geq\lfloor n/2\rfloor+2$. Dancer and Swann \cite{DancerSwann1999} proved that a positive quaternionic K\"ahler manifold admitting an isometric cohomogeneity one action of a compact semisimple Lie group is a Wolf space.  Finally, LeBrun--Salamon's strong rigidity theorem \cite{LeBrunSalamon1994} shows that any counterexample to the LeBrun--Salamon conjecture has $b_2(M)=0$.

The strategy to prove Theorem~\ref{thm:main} follows the same philosophy as the proof of the cases $n=3,4$ in \cite{BuczynskiWisniewskiWeber2022}. On the one hand, one can show that a hypothetical non-Wolf quaternionic Kähler manifold of minimal dimension must have an isometry group of rank at most one: so the connected component of the identity must be isomorphic to $\{1\}, S^1, \operatorname{SU}(2)$, or $\SO(3)$.   The minimality assumption on the dimension of the counterexample allows us to get a much stronger bound on the symmetry group than in \cite{Fang2008,OcchettaRomanoSolaWisniewski2021,BuczynskiWisniewskiWeber2022}, whose results apply without this hypothesis. 

To prove this bound, we work on the associated twistor space  $Z$. If there were a torus $T$ of isometries of rank more than one, the relevant fixed component under a corank one subtorus would be positive quaternionic K\"ahler, hence a Wolf space by minimality.  Thanks to results of \cite{BuczynskiWisniewskiWeber2022}, we conclude that the fixed components $Y \subseteq Z^{T_{\mathbb{C}}}$ with extremal weights are points.  By the recognition principle \cite[Theorem 5.1]{OcchettaRomanoSolaWisniewski2021}, we conclude that $Z$ is an adjoint variety, and hence $M$ is a Wolf space.

On the other hand, we obtain a lower bound on the dimension of the isometry group of a positive quaternionic Kähler manifold. When these two bounds are incompatible, we conclude that the manifold is a Wolf space.

To obtain this lower bound, we first use Salamon's index theory to express $d-\delta_n$ as a characteristic number, where 
\[
d=\dim\Isom(M),\qquad \delta_n=\begin{cases}n+1,&n\text{ odd},\\ n+3,&n\text{ even}.\end{cases}
\]
Using the $E$--$H$ formalism and Chern--Weil theory, we write this characteristic number as the integral of a differential form expressed in terms of the scalar curvature and the quaternionic Weyl curvature.  We then prove in subsequent sections that this integral is strictly positive, yielding $d>\delta_n$, and hence $d>3$.

The positive curvature terms form a natural hierarchy.
The first level consists of positive-semidefinite matrix moments of the normalized curvature.
The second consists of even scalar orbital moments of the same curvature variable; in
Appendix~\ref{app:orbital-inclusion} the first cone is
shown to be contained in the second.  The third level consists of integrated Hodge squares,
which have a different geometric origin and need not be pointwise positive.

As an additional correctness check, we developed a partial formalization of the main result in Lean~\cite{deMouraUllrich2021}.  It verifies that every complete positive quaternionic K\"ahler manifold of quaternionic dimension $1\le n\le14$ is symmetric, assuming at most $17$ results from the literature as explicit hypotheses.  Appendix~\ref{app:lean} describes its scope and our assessment of its reliability.

\subsection{AI use}
Large language models were used extensively during the research and preparation of this article, primarily OpenAI GPT-6-Astra and GPT-5.6-Sol, with additional occasional use of Claude Opus 5, Claude Opus 5.5 and Claude Fable. They were used interactively to explore and develop proof strategies and research directions proposed by the authors, derive and check formulas, design and write computer calculations, search for relevant literature, test arguments for gaps, and prepare preliminary drafts and expository material. In particular, LLM-assisted symbolic and numerical calculations played an important role in finding the positivity decompositions used in the proof.  

The authors checked all the mathematical arguments, computations, and references, largely rewrote the article themselves, and take full responsibility for the content and its correctness. This project demonstrates the effectiveness of LLM tools in accelerating the process of learning about a research area and achieving substantial and meaningful results that can be understood, presented, and verified in a traditional manner. It was an encouraging, enabling, and educational experience for the mathematicians involved, and we hope it conveys a positive message for the future of human-AI collaborations.

\section{A minimal counterexample has symmetry group of rank at most one}
\label{sec:minimal-symmetry}

The purpose of this section is to establish the following result. Let $\Isom(M)$ denote the isometry group of a Riemannian manifold $M$. Recall, by Myers--Steenrod \cite{MyersSteenrod1939}, that this is a Lie group acting smoothly on $M$.   Recall that, if $M$ is compact, so is $\Isom(M)$: this is well-known, and a consequence of the fact that an isometry is determined by its action on the tangent space to any point, thus embedding $\Isom(M)$ into the orthonormal frame bundle of $M$.  

We also let $\CAut(Z)$ denote the group of holomorphic contact automorphisms of a holomorphic contact manifold $Z$. For us,  $Z$ will be projective, so all holomorphic automorphisms are algebraic.  

For any Lie group $G$, let $G_0 \subseteq G$ denote the connected component of the identity.  We also use the notation $\Isom_0(M) \leq \Isom(M)$  and $\CAut_0(Z) \leq \CAut(Z)$ for these connected components.

Recall also that a Riemannian manifold $(M,g)$ of dimension at least two with Ricci curvature satisfying $\operatorname{Ric}(M) \geq k g$ for some $k > 0$ is compact, by Bonnet--Myers \cite{Myers1941}.  This includes the case of a positive quaternionic K\"ahler manifold, since its metric is positive Einstein: $\operatorname{Ric}(M) = \lambda g$ for $\lambda > 0$.

Next, we recall briefly the twistor construction \cite{Salamon1982}, \cite{LeBrunSalamon1994}. Let $M$ be a connected complete positive quaternionic K\"ahler manifold. Let $Q \subset \End(TM)$ be the parallel rank-three bundle of imaginary quaternionic multiplication on the tangent bundle. Let $\pi: Z \to M$ be the twistor space of $M$, given by $Z_x := \{J \in Q_x \mid J^2 = -\operatorname{id}_{T_x M}\}$. It carries a canonical K\"ahler--Einstein metric of positive Ricci curvature, hence $K_Z^{-1}$ is positive. Hence by the Kodaira embedding theorem, $K_Z^{-1}$ is ample so $Z$ is projective and Fano, and it is connected because $M$ is.  There is a horizontal holomorphic distribution $F \subseteq TZ$ and line bundle $L := TZ/F$. Then $F$ is a contact distribution and $L$ a contact line bundle. This is essentially because the Lie bracket of vector fields induces a skew-symmetric bilinear form $\Lambda^2 F \to L$ whose restriction to horizontal lifts of vector fields on $M$ is computed by the curvature of the quaternionic bundle $Q$, so in the positive case this is a nondegenerate pairing.
The nondegenerate pairing $\Lambda^2 F \to L$ can be viewed as a section of $(\Lambda^2 F^* \otimes L)$, and its top exterior power is nowhere zero by nondegeneracy, yielding a trivialization of $\Lambda^{2n} F^* \otimes L^n$.  Thus $\det F \cong L^n$. By 
the exact sequence $0 \to F \to TZ \to L \to 0$, we conclude that $K_Z^{-1} = \det TZ \cong \det F \otimes L \cong L^{n+1}$. Since $L^{n+1} \cong K_Z^{-1}$ is positive and ample, so is $L$.

Recall that Kodaira's vanishing theorem states that, for $E$ a positive line bundle on a compact K\"ahler manifold $N$, the higher cohomology $H^{>0}(N, E \otimes K_N)$ vanishes.  In the preceding situation, we get that $H^{>0}(Z, L^j \otimes K_Z) = 0$ for all $j > 0$, that is, $H^{>0}(Z, L^{k}) = 0$ for $k > -n-1$. In particular, $H^{>0}(Z, \mathcal{O}_Z)=0$. Since $H^0(Z, \mathcal{O}_Z) = 1$, we see also that the Euler characteristic of the structure sheaf is one: $\chi(Z,\mathcal{O}_Z) = 1$. 

It is a standard fact that, since $Z$ is Fano, it is simply connected. We explain why for the convenience of the reader: let $N$ be Fano. Yau's solution of the Calabi conjecture implies that there is a K\"ahler metric whose Ricci form is a positive representative of $2 \pi c_1(N)$; however, in our situation of $N=Z$, we don't need to use this theorem since we have  its positive K\"ahler--Einstein metric.
Next, Bonnet--Myers implies that $\pi_1(N)$ is finite.  By Kodaira vanishing, $H^{>0}(N, \mathcal{O}_N)=0$ and hence $\chi(N,\mathcal{O}_N) = 1$. Now if we take the $d$-fold universal covering $\gamma: \widetilde N \to N$, 
$K_{\widetilde N}^{-1} \cong\gamma^* K_N^{-1}$ is ample since $K_N^{-1}$ is.
Thus $\widetilde N$ is also Fano.  Then $\chi(\widetilde N, \mathcal{O}_{\widetilde N})=1$. But by Hirzebruch--Riemann--Roch, $\chi(N, \mathcal{O}_N) = \int_N \operatorname{Td}(T_N)$.
 Since $\gamma^* T_N = T_{\widetilde N}$,  we deduce from this that $\int_{\widetilde N} \operatorname{Td}(T_{\widetilde N}) = d$.  But as $\widetilde N$ is also Fano, this must also equal one.  So $d=1$. We conclude that $N$ is simply connected.

 Recall that the rank, $\rk K$, of a compact Lie group is the dimension of a maximal torus contained in $K$; similarly, the rank of a reductive complex group $G$ is the dimension of a maximal complex torus contained in $G$. Note that $\rk K = \rk K_0$ and $\rk G = \rk G_0$, as every torus is contained in the connected component of the identity.
\begin{theorem}\label{M-min-symm}
Suppose that $M$ is a counterexample to the LeBrun--Salamon conjecture: it is a positive quaternionic K\"ahler manifold which is not a Wolf space. Suppose additionally that $M$ has minimal quaternionic dimension of all counterexamples.  Then $\rk \Isom(M) \leq 1$.
\end{theorem}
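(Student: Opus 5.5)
We argue by contradiction. Suppose $\rk \Isom(M) \geq 2$, and fix a two-dimensional torus $T \subseteq \Isom_0(M)$. Isometries of $M$ preserve $Q$. They therefore lift to holomorphic contact automorphisms of the twistor space $Z$, which gives an injection of $\Isom_0(M)$ into $\CAut_0(Z)$. Complexifying, we obtain an algebraic action of $T_{\CC} \cong (\CC^\times)^2$ on $Z$ preserving $F$ and $L$. After replacing $L$ by a power if necessary, this action linearizes $L$.

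The strategy follows the introduction. We show that every fixed component of $T_{\CC}$ on $Z$ with extremal weight is an isolated point. Then \cite[Theorem 5.1]{OcchettaRomanoSolaWisniewski2021} identifies $Z$ as an adjoint variety, so $M$ is a Wolf space, which is the desired contradiction.

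\textbf{Step 1: fixed loci of subtori of $T$ on $M$.} Let $S \subset T$ be a circle, i.e.\ a corank-one subtorus. Each connected component $N$ of the fixed set $M^S$ is a totally geodesic submanifold. By the standard analysis of Killing fields on quaternionic K\"ahler manifolds (Battaglia, and the framework of \cite{BuczynskiWisniewskiWeber2022}), $N$ is of one of the following types:
\begin{itemize}
\item a quaternionic submanifold, which is again positive quaternionic K\"ahler of strictly smaller quaternionic dimension;
\item a K\"ahler submanifold, whose twistor lift consists of two copies of a component of $Z^{S_{\CC}}$;
\item a point.
\end{itemize}
In the quaternionic case, minimality of $n$ forces $N$ to be a Wolf space, since $\dim_{\HH} N < n$.

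\textbf{Step 2: from fixed loci to extremal components.} Let $Y \subseteq Z^{T_{\CC}}$ be a fixed component on which the $T$-weight of $L$ is extremal. Our aim is to show that $Y$ is a point. Choose $S$ so that $Y$ lies in a component $Z_S$ of $Z^{S_{\CC}}$ on which $T/S$ acts with $Y$ extremal. The structure of $Z_S$ depends on the type of the corresponding component of $M^S$:
\begin{itemize}
\item If that component is quaternionic, it is a Wolf space $N$, so $Z_S$ is essentially the twistor space of $N$, i.e.\ an adjoint variety or a contact sub-locus.
\item Otherwise, $Z_S$ is a K\"ahler lift, and we use the results of \cite{BuczynskiWisniewskiWeber2022} on contact torus actions. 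These say that extremal fixed components of a contact torus action are either isolated points or Legendrian-type loci with constrained structure; components sitting at the sink or source carry the contact form nontrivially.
\end{itemize}
For a homogeneous adjoint variety with a rank-$\ge 1$ torus action compatible with the contact structure, the extremal fixed points of a generic $\CC^\times$ are isolated; these are the highest-weight lines. Using the second factor of $T$, we then separate any positive-dimensional extremal component of $Z^{S_{\CC}}$ into points. Ranging over finitely many choices of $S$, enough to cover all extremal $Y$, gives that every extremal $Y$ is a point.

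\textbf{Step 3: conclusion.} With all extremal fixed components isolated, \cite[Theorem 5.1]{OcchettaRomanoSolaWisniewski2021} applies, so $Z$ is adjoint and $M$ is Wolf, a contradiction. Hence $\rk \Isom(M) \le 1$.

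The main obstacle is Step 2. We must make precise which fixed components of a rank-one subtorus are twistor spaces of quaternionic submanifolds, so that minimality applies. We must also rule out positive-dimensional extremal components coming from K\"ahler-type fixed loci. The plan is to argue there via the weight of $L$ along $Y$ and the contact form: the contact form restricts to a nonzero form on extremal components of nonzero weight, which forces them to be twistor lifts of quaternionic fixed components, hence Wolf. Homogeneity of Wolf twistor spaces then reduces the question to explicit adjoint-variety combinatorics.
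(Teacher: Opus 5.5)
Your outline has the right architecture: argue by contradiction, pass to fixed components of corank-one subtori, use minimality to make quaternionic ones Wolf, show extremal fixed components are points, and conclude with \cite[Theorem 5.1]{OcchettaRomanoSolaWisniewski2021}. However, Step~2, which carries the whole content of the theorem, does not work as written. First, your choice of $S$ imposes nothing, since every subtorus fixes $Y\subseteq Z^{T_{\CC}}$. For a generic circle the component of $M^S$ through $\pi(Y)$ is of K\"ahler type, a case you leave to an unspecified appeal to \cite{BuczynskiWisniewskiWeber2022}; the ``contact form restricts nontrivially'' heuristic is not the operative mechanism. The missing idea is to take $S=\ker(\mu(Y)|_{T_K})_0$, where $\mu(Y)$ is the weight of $T$ on $L|_Y$, which is nonzero for extremal $Y$ by \cite{BuczynskiWisniewskiWeber2022}. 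Since $L_y=T_yZ/F_y$ is the vertical tangent line $T_yZ_x$, this $S$ fixes $J=y$ and acts trivially on $T_JZ_x$, hence on $Q_x$. So the component $N$ of $M^S$ through $x$ is automatically quaternionic \cite{GoertschesLoiudice2025}. Either $N$ is a point, and then $Y$ is one of the two $T$-fixed points of the twistor line, or $N$ is a Wolf space by minimality. No K\"ahler case arises.

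Second, and more seriously, your argument that $Y$ is a point fails even when $N$ is Wolf. On $Z_N=\pi^{-1}(N)$ the torus acts only through the rank-one quotient $T/S$, which is not a generic one-parameter subgroup of $\Aut(Z_N)$. Moreover, $Y$ is already fixed by all of $T$, so there is no further factor to separate it with. Homogeneity of $Z_N$ is irrelevant because $\Aut(Z_N)$ does not act on $Z$. Indeed, rank-one contact actions on adjoint varieties can have positive-dimensional extremal fixed components. For example, on $\mathbb{P}^3$, take weights $(1,1,0,0)$ on $e_1,e_2,f_1,f_2$ with $\omega(e_i,f_j)=\delta_{ij}$. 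This rescales $\omega$, and the sink and source are the Legendrian lines $\mathbb{P}\langle e_1,e_2\rangle$ and $\mathbb{P}\langle f_1,f_2\rangle$. What is needed instead is a global count on $Z$:
\begin{itemize}
\item $H^0(Z,L)\cong\mathfrak g$ with the adjoint action \cite{Beauville1998}, and $\mathfrak g\cong\mathfrak k_{\CC}$ is reductive \cite{NittaTakeuchi1987}.
\item If $T$ is the complexification of a \emph{maximal} torus (an arbitrary $2$-torus will not do), the nonzero weight $\mu(Y)$ is a root, so $\dim H^0(Z,L)_{\mu(Y)}=1$.
\item For extremal $\mu(Y)$ the restriction $H^0(Z,L)_{\mu(Y)}\to H^0(Y,L)$ is an isomorphism \cite[Lemma~3.6]{BuczynskiWisniewskiWeber2022}, so $h^0(Y,L)\le1$.
\item Very ampleness of $L|_{Z_N}$ gives $h^0(Y,L)\ge\dim Y+1$; hence $\dim Y=0$.
\end{itemize}
Finally, to invoke \cite[Theorem 5.1]{OcchettaRomanoSolaWisniewski2021} you must also verify its hypotheses, notably $\Pic(Z)=\mathbb Z\cdot[L]$. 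The paper derives this from strong rigidity ($b_2(M)=0$) and Kobayashi--Ochiai.
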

\begin{proof}[Proof of Theorem~\ref{M-min-symm}]
    Let $M$ be a minimal-dimension counterexample.  
    Let $K := \Isom_0(M)=\Aut_0(M,g,Q)$ with Lie algebra $\mathfrak{k}$; note that $K$ is compact. Let $G := \CAut_0(Z,F)$ with Lie algebra $\mathfrak{g}$.
    Since every element of $K$ lifts to a holomorphic contact isometry of $Z$, we have injective homomorphisms $K \to G$ and $\mathfrak{k} \to \mathfrak{g}$. We view $K$ as a subgroup of $G$. We obtain also  homomorphisms from the complexifications of $K$ and $\mathfrak{k}$: 
    \[ K_\mathbb{C} \to G, \quad \mathfrak{k}_{\mathbb{C}} \to \mathfrak{g},
    \]
    and these are isomorphisms by \cite[Corollary 4]{NittaTakeuchi1987}.
    Therefore, $G$ is reductive.
By \cite[Proposition 1.1]{Beauville1998}, the contact vector fields identify with sections of $L$:
$H^0(Z,L) \cong \mathfrak{g}$, $G$-equivariantly, with action of $G$ on $\mathfrak{g}$ given by the adjoint action.  

    Now, let $T_K \leq K$ be a maximal torus. Let $r:= \rk T_K$ be its rank.  The assertion of the theorem is that, given $M$ is a minimal counterexample, $r \leq 1$. For a contradiction, suppose $r \geq 2$. 

    Let $T := (T_K)_{\mathbb C}$ be the complexification of $T_K$. The inclusion $T_K \to K$ induces a holomorphic homomorphism $T \to K_{\mathbb C} \cong G$. The complexification of an injective homomorphism of compact Lie groups is again injective: this follows from polar decomposition; see \cite[Ch.~III, \S 8, Proposition 8.3, Exercises~8.8(1),(4)]{BrockerTomDieck1985}. We may therefore regard $T$ as a subgroup of $G$.  Since it is a complex torus of rank $r=\rk G$, it is a maximal torus of $G$ containing $T_K$.

    Let $X^*(T)$ denote the character lattice of $T$, which is isomorphic to $\mathbb{Z}^{r}$.  We will also refer to its elements as weights when they occur in $T$-representations, including the fibers of $L$.
    We will define some polytopes in it by taking convex hulls of certain sets of weights. 
    
    We consider the fixed locus $Z^T$. It cannot be empty, since $Z$ is projective: every positive-dimensional orbit of $T$ has a nontrivial boundary consisting of orbits of smaller dimension, hence there must be $T$-orbits of dimension zero, hence fixed points. For each $y \in Z^T$, $T$ preserves the line $L_y$, and acts on it by a weight. On a connected component $Y \subset Z^T$ of the fixed locus, this weight must be constant (since $X^*(T)$ is discrete). Call this weight $\mu(Y)$. Let $H^0(Z,L)_{\mu(Y)} \subset H^0(Z,L)$ be the subspace of sections of $Z$ on which $T$ acts via the weight $\mu(Y)$. Restricting  sections
    gives a $T$-equivariant linear map
    \begin{equation}\label{rest-twistor}
    H^0(Z,L)_{\mu(Y)} \to H^0(Y, L).
    \end{equation}
     Note that, since $T$ is a torus, $H^0(Z,L) = \bigoplus_{\lambda \in X^*(T)} H^0(Z,L)_\lambda$; for every $\lambda$, there is similarly a restriction map $H^0(Z,L)_\lambda \to H^0(Y,L)$, but this is zero for $\lambda \neq \mu(Y)$, since $T$ acts on $H^0(Y,L)$ by the weight $\mu(Y)$, and the restriction map commutes with the action of $T$.

     Now, suppose that $\mu(Y) \neq 0$. Let $S := \ker(\mu(Y)|_{T_K})_0 \leq T_K$. 
    This is a subtorus of $T_K$ of corank one. By our assumption $r \geq 2$, $S$ is nontrivial.  Now $\pi(Y)$ is connected and $\pi(Y) \subset M^S$.  Let $N \subset M^S$ be the component containing $\pi(Y)$. We claim that $N$ is a quaternionic K\"ahler submanifold of $M$.  To see this, first observe that, for every $y=(x,J) \in Y$, $S$ acts trivially on $L|_y$, by definition.  By construction $L|_y$ identifies with the tangent space $T_y Z_x$ to the twistor fiber $Z_x$.  Since $S$ fixes $y$ and acts trivially on this tangent line, its induced action on $Z_x$ fixes $J$ and has trivial isotropy representation on $T_J Z_x$. Thus $S$ acts trivially on $Q_x$.  Since the possible weights of $S$ on $Q_w$ for $w \in N$ form a  discrete set, $N$ is connected, and $x \in N$, we deduce that all these weights are zero and $S$ acts trivially on $Q_w$, for all $w \in N$.  By \cite[Proposition 5.1 and its proof]{GoertschesLoiudice2025}, $N$ is therefore positive quaternionic K\"ahler.  For the convenience of the reader we recall the reason:    $Q$ preserves $TN$ and defines an almost quaternionic structure. But $N$ is also totally geodesic, being locally defined as the fixed locus for $S$.  So the ambient parallel bundle $Q$ on $M$ restricts to a parallel rank-three subbundle of $\End(TN)$. Thus $N$ is quaternionic K\"ahler by definition if $\dim N > 4$.  For dimension four, one has to check that $N$ is Einstein with positive scalar curvature and that its twistor space has an integrable almost complex structure: the former uses the decomposition of curvature into a scalar part and a quaternionic Weyl part, whereas the latter uses that the twistor bundle and the almost complex structure on the twistor space are obtained by  restriction from $M$, where the almost complex structure on $Z$ is integrable; see \emph{op.~cit.~} and references therein for details. Since $N$ is closed in $M$, which is compact, $N$ is also compact and hence complete.

    If $N$ is a point, then $Y \subset \pi^{-1}(N) \cong S^2$, which has a nontrivial action of $T$ since $\mu(Y) \neq 0$, and hence it has exactly two fixed points. Thus in this case $Y$ is also a point.

    If $N$ is not a point, the faithfulness of the action of $T_K$ on $M$ shows that $\dim N < \dim M$. Since $N$ is quaternionic K\"ahler and $M$ is a minimal counterexample to the LeBrun--Salamon conjecture, it follows that $N$ is a Wolf space.  Its twistor space is the homogeneous contact space associated with the highest root \cite[Theorem 6.1]{Wolf1965}, equivalently the adjoint variety \cite[Proposition 2.6]{Beauville1998}, with embedding given by $L|_{Z_N}$, where $Z_N := \pi^{-1}(N) \subset Z$. This line bundle is very ample, as is its restriction $L|_Y$ to $Y \subset Z_N$. 
    As a result, we obtain the inequality:
    \begin{equation}
    \label{e:h0y-bound}
    \dim H^0(Y, L)  \geq \dim Y + 1.
    \end{equation}
    In summary, either $Y$ is a point, or else $\dim H^0(Y,L) \geq \dim Y + 1 \geq 2$.

To proceed, we need to know the following basic fact about a hypothetical counterexample to the LeBrun--Salamon conjecture:
\begin{lemma}
$\Pic(Z) = \mathbb{Z} \cdot [L] \cong \mathbb{Z}$.
\end{lemma}
\begin{proof}
    As noted in \cite[Remark 4.1]{OcchettaRomanoSolaWisniewski2021}, \cite[Proposition 2.13]{KebekusPeternellSommeseWisniewski2000} implies that every contact Fano manifold has Picard group isomorphic to \(\mathbb Z\), unless it is isomorphic to a type $A$ adjoint variety, 
    which is the twistor space of a Wolf space. We can exclude this case by \cite[Corollary 3.4]{LeBrun1995}.  However, we 
 give a direct argument in the present twistor setting.
    
    Since $M$ is not a Wolf space, strong rigidity \cite{LeBrunSalamon1994} implies that $b_2(M)=0$. Since $Z$ is an $S^2$ bundle over $M$, the Leray--Serre spectral sequence  implies that $b_2(Z) \in \{0,1\}$. Since the class $c_1(L)$ of the contact line bundle restricts to the anticanonical class $c_1(\mathcal{O}(2)) \neq 0$ on the fibers $Z_x \cong \mathbb{CP}^1$, we must have $b_2(Z) = 1$. Since $Z$ is Fano, it is simply connected.  We deduce from the universal coefficient theorem that $H^2(Z,\mathbb{Z}) \cong \mathbb{Z}$:
    \[
    0 = \Ext(H_1(Z,\mathbb{Z}),\mathbb{Z}) \to H^2(Z,\mathbb{Z}) \to \Hom(H_2(Z,\mathbb{Z}),\mathbb{Z}) \cong \mathbb{Z}.
    \]
    We now apply the long exact sequence associated to the exponential sequence $0 \to \mathbb{Z} \to \mathcal{O} \to \mathcal{O}^\times \to 0$, giving by Kodaira vanishing the exact sequence
    \[
    0 =H^1(Z,\mathcal{O}_Z) \to H^1(Z,\mathcal{O}^\times_Z) \to H^2(Z,\mathbb{Z}) \to H^2(Z,\mathcal{O}_Z)=0.
    \]
So $H^1(Z,\mathcal{O}_Z^\times) \cong \mathbb{Z}$.
    But this cohomology group is the analytic Picard group, which we conclude is $\mathbb{Z}$. Since $Z$ is projective, by GAGA, the analytic and algebraic Picard groups coincide (this can also be deduced from Chow's theorem via the divisorial viewpoint). Thus $\Pic(Z) \cong \mathbb{Z}$.  
    
    By \cite[Remark 4.1]{OcchettaRomanoSolaWisniewski2021},
$\Pic(Z) = \mathbb{Z} \cdot [L]$, unless  $Z$ is  a projective space; we give an argument for the reader's benefit. First, $\Pic(Z) = \langle [E] \rangle$ for some line bundle $E$, and $L \cong E^a$ for some $a \in \mathbb{Z}$. We have $a \neq 0$ since $L$ is positive, and we can assume $a \geq 1$ up to replacing $E$ with $E^{-1}$, so that $E$ is a positive primitive generator of $\Pic(Z)$. We obtain also that $-[K_Z] = (n+1)[L] = a(n+1)[E]$.  The number $a(n+1)$ is called the Fano index of $Z$.
    
    By \cite{KobayashiOchiai1973}, for a Fano $m$-fold, the Fano index is at most $m+1$, with equality if and only if the variety is isomorphic to a projective space. So $a(n+1) \leq 2n+2$, that is, $a \leq 2$, with equality if and only if $Z \cong \mathbb{P}^{2n+1}$. 
    
    In the latter case, $M \cong \mathbb{H}\mathbb{P}^n$ by \cite[Corollary 3.3]{LeBrun1995}.  Thus for $M$ not to be a Wolf space, we must have $a=1$. So $\Pic(Z) = \langle L \rangle$.
\end{proof}
    
    Next, let $\Delta := \operatorname{Conv}(\{\mu(Y): Y \subseteq Z^T \text{ is a component}\}) \subseteq X^*(T) \otimes_{\mathbb{Z}} \mathbb{R} \cong \mathfrak{t}^*$ for $\mathfrak{t} = \Lie T_K$.  
    The vertices of $\Delta$ are called \emph{extremal weights}.  By \cite[Lemma 3.6]{BuczynskiWisniewskiWeber2022}  (see also \cite[Lemma 5.3]{OcchettaRomanoSolaWisniewski2021}), if $\mu(Y)$ is extremal, then \eqref{rest-twistor}  is an isomorphism.   Moreover, by \cite[Lemma 3.2, Corollary 4.2]{BuczynskiWisniewskiWeber2022}, $\mu(Y) \neq 0$ for every extremal weight.

 Now, if $Y$ is a point, then $H^0(Y,L) \cong \CC \neq 0$. If not, then \eqref{e:h0y-bound} implies that $H^0(Y,L) \neq 0$. Either way, by surjectivity of \eqref{rest-twistor}, we must  have $H^0(Z,L)_{\mu(Y)} \neq 0$. Since $H^0(Z,L) \cong \mathfrak{g}$ with $G$ acting by the adjoint representation, and $\mu(Y) \neq 0$, we conclude that $\mu(Y)$ is a root of $G$ (as was also observed in \cite[Lemma 5.3]{OcchettaRomanoSolaWisniewski2021}). Then $\dim H^0(Z,L)_{\mu(Y)} = 1$, and by surjectivity of \eqref{rest-twistor}, $\dim H^0(Y,L) \leq 1$. As a result, \eqref{e:h0y-bound} is impossible. So $Y$ must be a point.

From the preceding, we conclude that all extremal weight components $Y \subset Z^T$ are points.   Thus by \cite[Theorem 5.1]{OcchettaRomanoSolaWisniewski2021}, $Z$ is an adjoint variety, hence the twistor space of a Wolf space. By the uniqueness of the twistor space \cite[Corollary 3.4]{LeBrun1995}, $M$ must be a Wolf space, which supplies the contradiction. \qedhere

\end{proof}

\begin{remark}
The argument above applies in all dimensions. For $n\leq4$, however,
\cite[Theorem~5.1]{OcchettaRomanoSolaWisniewski2021} uses the known cases of the
LeBrun--Salamon conjecture proved in \cite{BuczynskiWisniewskiWeber2022}, so our proof in this
range is not independent of that work. The content of this result is the symmetry bound for
minimal counterexamples in dimensions $n\geq5$.
\end{remark}

\begin{corollary}\label{cor:rank-one-isometry}
For $M$ as in Theorem~\ref{M-min-symm}, $\Isom_0(M)$ is isomorphic to either $\{1\}, S^1$, $\operatorname{SU}(2)$, or $\operatorname{SO}(3)$.
\end{corollary}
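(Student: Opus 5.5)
The corollary will follow from Theorem~\ref{M-min-symm} and the structure theory of compact connected Lie groups. First, $K:=\Isom_0(M)$ is a compact connected Lie group, since $M$ is compact (Bonnet--Myers) and $\Isom(M)$ is compact. By Theorem~\ref{M-min-symm}, $\rk K\le 1$. The plan is to classify compact connected Lie groups of rank at most one.

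\emph{Rank zero.} If $\rk K=0$, then $K$ contains no nontrivial torus. Every element of a compact connected Lie group lies in a maximal torus, so $K=\{1\}$. Alternatively, a positive-dimensional compact group has a nonzero Lie algebra element $X$, and the closure of $\exp(\RR X)$ is a nontrivial torus.

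\emph{Rank one.} Write the Lie algebra as $\mathfrak{k}=\mathfrak{z}\oplus[\mathfrak{k},\mathfrak{k}]$, with $\mathfrak{z}$ the center and $[\mathfrak{k},\mathfrak{k}]$ compact semisimple. The rank is additive: $\dim\mathfrak{z}+\rk[\mathfrak{k},\mathfrak{k}]=1$. This leaves two cases.
\begin{itemize}
\item If $\mathfrak{z}$ is one-dimensional and $[\mathfrak{k},\mathfrak{k}]=0$, then $K$ is abelian, compact, connected and one-dimensional, hence $K\cong S^1$.
\item If $\mathfrak{z}=0$, then $\mathfrak{k}$ is compact semisimple of rank one. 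The only compact semisimple Lie algebra of rank one is $\mathfrak{su}(2)$: semisimple algebras have rank at least one per simple factor, and the only rank-one root system is $A_1$. So $K$ is a connected group with Lie algebra $\mathfrak{su}(2)$. Its universal cover is $\operatorname{SU}(2)$, which has center $\{\pm1\}$. Hence $K\cong\operatorname{SU}(2)$ or $K\cong\operatorname{SU}(2)/\{\pm1\}\cong\SO(3)$.
\end{itemize}

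There is no genuine obstacle; the only step requiring care is the rank additivity and the reduction to $\mathfrak{su}(2)$. One can also handle this step directly: a maximal torus of a semisimple $K$ has dimension equal to the number of simple roots of $\mathfrak{k}_\CC$, and rank one forces $\mathfrak{k}_\CC\cong\mathfrak{sl}_2(\CC)$. The classification of connected groups with a given Lie algebra, as quotients of the simply connected group by a central discrete subgroup, then finishes the argument.
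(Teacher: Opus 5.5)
Your proposal is correct and follows the paper's argument: compactness of $\Isom(M)$ together with Theorem~\ref{M-min-symm}, followed by the classification of compact connected Lie groups of rank at most one. The only difference is that you carry out that classification explicitly (center plus semisimple part, $\mathfrak{su}(2)$ as the only rank-one compact semisimple algebra, and its quotients), whereas the paper simply cites it.
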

\begin{proof} As we observed at the beginning of the section, $M$ is compact, and as a consequence, so is $\Isom(M)$.  Then the proof reduces to the classification of connected compact Lie groups of rank at most one. 
\end{proof}
In particular, the dimension of $\Isom_0(M)$, hence also of $\Isom(M)$, can be at most $3$ for a minimal counterexample $M$.

\section{Quaternionic characteristic classes and Salamon's Hilbert polynomial}
\label{sec:index-moments}

In this section we relate the dimension $d=\dim \Isom_0(M)$ of the isometry group of a positive quaternionic--K\"ahler manifold $M^{4n}$ to characteristic classes of $M$. This relation will be used in the next section to obtain a lower bound for $d$. In the first part of this section we recall the quaternionic geometric and characteristic-class formalism that will be used later in the positivity arguments. In the second part we use Salamon's Hilbert polynomial to relate the relevant characteristic number with $d$.

\subsection{Quaternionic--K\"ahler geometry and characteristic classes}

The purpose of the first two parts of the subsection is to fix the notation and explain how the local $E$--$H$ formalism gives rise to global characteristic classes. In the third part, we then construct directly a second connection on the local bundles $E_i\oplus H_i$ whose curvature is the quaternionic Weyl curvature. We follow the standard descriptions in \cite{Besse1987,Kraines1966,Salamon1982}; for the explicit $E$--$H$ curvature decomposition used in the direct construction we refer to \cite{KramerSemmelmannWeingart1998}.

\subsubsection{The Kraines form and the \texorpdfstring{$\mathfrak{sp}(1)$}{sp(1)}-curvature}

Recall that $Q$ is the rank-three subbundle $Q\subset\End(TM)$ preserved by the Levi--Civita connection and locally spanned by orthogonal almost-complex structures $I,J,K$ satisfying the quaternionic relations
$$I^2=J^2=K^2=-\operatorname{Id},\qquad IJ=K=-JI.$$
Although $Q$ is globally defined, the endomorphisms $I,J,K$ are in general only local choices. At every $x\in M$, the fiber $Q_x$ is modeled on $\operatorname{Im}\HH$, and changing a local quaternionic frame acts on $Q_x$ through the standard double covering $\Sp(1)\to\SO(3)$.

For a local quaternionic frame $(I,J,K)$, let
$$\omega_I(X,Y)=g(X,IY),\qquad \omega_J(X,Y)=g(X,JY),\qquad \omega_K(X,Y)=g(X,KY)$$
be the corresponding fundamental $2$-forms. Since the triple $(\omega_I,\omega_J,\omega_K)$ changes by the same $\SO(3)$-rotation as $(I,J,K)$, the $4$-form
$$\Omega:=\omega_I\wedge\omega_I+\omega_J\wedge\omega_J+\omega_K\wedge\omega_K$$
is independent of the local quaternionic frame. This is the Kraines form \cite{Kraines1966,Salamon1982}. Since $Q$ is parallel and the expression above is $\SO(3)$-invariant, $\Omega$ is parallel and in particular closed.

The quaternionic structure determines a canonical orientation. Namely, if $e_1,\ldots,e_n$ is a quaternionic orthonormal basis, we declare
$$\{e_1,Ie_1,Je_1,Ke_1,\ldots,e_n,Ie_n,Je_n,Ke_n\}$$
to be positively oriented. With this convention,
\begin{equation}\label{eq:kraines-form}
\Omega^n=(2n+1)!\,\vol_g.\end{equation}

The same local forms describe the $\mathfrak{sp}(1)$-part of the Levi--Civita curvature. The curvature takes values in $\mathfrak{sp}(n)\oplus\mathfrak{sp}(1)$ and decomposes into the quaternionic Weyl curvature and a scalar-curvature term; see \cite[Theorem 3.1]{Salamon1982}. The quaternionic Weyl curvature has zero $\mathfrak{sp}(1)$-projection, so the $\mathfrak{sp}(1)$-part is completely determined by $\kappa$. If $J_1,J_2,J_3$ are the generators of $\mathfrak{sp}(1)$ acting on the tangent representation as $I,J,K$, with trace taken in the standard complex two-dimensional representation, so that $\tr_H(J_\alpha J_\beta)=-2\delta_{\alpha\beta}$, then, with our conventions,
$$R^{\mathfrak{sp}(1)}=\frac{\kappa}{8n(n+2)}\bigl(\omega_IJ_1+\omega_JJ_2+\omega_KJ_3\bigr).$$

\subsubsection{The \texorpdfstring{$E$--$H$}{E--H} formalism and characteristic classes}\label{subsubsec:EH-formalism}

We now recall the usual $E$--$H$ formalism for quaternionic--K\"ahler manifolds \cite{Salamon1982}. The complexified tangent representation of $G=\Sp(n)\Sp(1)$ is the tensor product of the standard complex representations
$$E\cong\CC^{2n},\qquad H\cong\CC^2.$$
Thus, if $P\to M$ denotes the principal $G$-bundle of quaternionic orthonormal frames, then globally
$$T_\CC M\cong P\times_G(E\otimes H).$$
The tensor product $E\otimes H$ is a representation of $G$, because the central element $(-1,-1)\in\Sp(n)\times\Sp(1)$ acts trivially on it. By contrast, $E$ and $H$ separately do not in general descend to global vector bundles on $M$. 

Choose a good cover $\{U_i\}$ of $M$. Since each $U_i$ is contractible, $P|_{U_i}$ lifts to a principal $\Sp(n)\times\Sp(1)$-bundle, and we obtain complex vector bundles $E_i\to U_i$ and $H_i\to U_i$ such that
$$T_\CC M|_{U_i}\cong E_i\otimes H_i.$$
The Levi--Civita connection on the principal $G$-bundle $P$ lifts to the local $\Sp(n)\times\Sp(1)$-bundle. After choosing local trivializations, writing its connection form as a pair
$$\omega_i=(\omega_i^E,\omega_i^H),\qquad \omega_i^E\in\Omega^1(U_i,\mathfrak{sp}(n)),\quad \omega_i^H\in\Omega^1(U_i,\mathfrak{sp}(1)),$$
we obtain induced connections $\nabla^{E_i}$ and $\nabla^{H_i}$ on $E_i$ and $H_i$. Under the tensor-product representation $E_i\otimes H_i$, the complexified Levi--Civita connection is therefore
$$\nabla^{LC}=\nabla^{E_i}\otimes 1+1\otimes\nabla^{H_i},$$
with curvature
$$F_i^{TM}=F_i^E\otimes\operatorname{Id}_{H_i}+\operatorname{Id}_{E_i}\otimes F_i^H.$$
Thus the two curvature components $F_i^E$ and $F_i^H$ are simply the two components of the same lifted Levi--Civita curvature, viewed through the two standard representations. Set
$$X_i^E:=\frac{\sqrt{-1}}{2\pi}F_i^E,\qquad X_i^H:=\frac{\sqrt{-1}}{2\pi}F_i^H.$$
Since $F_i^E$ and $F_i^H$ take values in $\mathfrak{sp}(n)$ and $\mathfrak{sp}(1)$ respectively, their formal roots occur in opposite pairs. We write the formal roots of $E_i$ as
\begin{equation}\label{eq:formal-roots-E}\pm x_{1,i},\ldots,\pm x_{n,i}\end{equation}
and those of $H_i$ as \begin{equation}\label{eq:formal-roots-H}  \pm h_i. \end{equation} The symbols $x_{a,i}$ and $h_i$ are formal Chern roots; they need not be actual $2$-forms. The formal roots of $T_\CC M|_{U_i}\cong E_i\otimes H_i$ are therefore
$$\pm x_{a,i}\pm h_i,\qquad 1\leq a\leq n.$$

We first show how to obtain a global closed 4-form from the formal roots $\pm h_i$. On an overlap $U_i\cap U_j$, choose a lift $(a_{ij},q_{ij})$ of the transition function to $\Sp(n)\times\Sp(1)$. The lift is unique up to multiplication by $(-1,-1)$, while the normalized $H$-curvatures transform by conjugation,
$$X_j^H=q_{ij}^{-1}X_i^Hq_{ij}.$$
Since changing $q_{ij}$ to $-q_{ij}$ has no effect on this formula, the conjugation-invariant forms $\frac12\tr\bigl((X_i^H)^2\bigr)$ agree on overlaps. Moreover, they are Chern--Weil forms and therefore they are closed. Since $\pm h_i$ are the formal roots of $H_i$, then on each $U_i$
$$h_i^2=\frac12\tr\bigl((X_i^H)^2\bigr).$$
Thus these local $4$-forms patch to a global closed $4$-form, which we denote by 
$$u:=h^2.$$
We use the same notation for a global closed characteristic form and its de Rham class when the meaning is clear. Since $Q_\CC\cong\Sym^2H_i$ locally,
\begin{equation}\label{eq:u-p1Q}4u=p_1(Q)=\left(\frac{\kappa}{8\pi n(n+2)}\right)^2\Omega,\end{equation} see \cite[\S 2]{SemmelmannWeingart2004}. Thus $u$ is rational and is represented by a positive multiple of the Kraines form. 

A similar discussion applies to the formal roots $\pm x_{a,i}$. Put $q_{0,i}=n$ and \begin{equation}\label{eq:q-j}q_{j,i}=\sum_{a=1}^n x_{a,i}^{2j}\end{equation} for $j\geq1$. If $\tau_j(TM)$ is the $j$-th Pontryagin power sum, then
$$\tau_j(TM)_{|U_i}=\sum_{a=1}^n\bigl((x_{a,i}+h)^{2j}+(x_{a,i}-h)^{2j}\bigr)
=2\sum_{s=0}^j\binom{2j}{2s}u^{j-s}q_{s,i}.$$
This triangular identity expresses each $q_{j,i}$ as a universal rational polynomial in the Pontryagin classes of $TM$ and $u$. Hence symmetric polynomials in the $x_a^2$ define global rational characteristic classes and we can consider global rational characteristic classes $q_j=\sum_{a=1}^n x_{a}^{2j}$.

\subsubsection{A Weyl-curvature connection and the forms \texorpdfstring{$Y$ and $z_j$}{Y and z j}}\label{subsubsec:weyl-Y-z}

We now consider the local direct sums $E_i\oplus H_i$, of complex rank $r:=2n+2$, associated to the same lifted principal bundles as $E_i\otimes H_i$, and equip them with $\nabla_i^0=\nabla^{E_i}\oplus\nabla^{H_i}$. Their formal Chern polynomial is
\begin{equation}\label{eq:chern-polynomial-EH}
C(t)=(1-ut^2)\prod_{a=1}^n(1-x_a^2t^2)=1+c_2t^2+c_4t^4+\cdots.
\end{equation}

By the preceding discussion its coefficients define global rational classes $c_{2j}\in H^{4j}(M;\mathbb Q)$. Our goal is to construct representatives involving only the quaternionic Weyl curvature.

Let $\sigma_E$ and $\sigma_H$ be the nondegenerate skew-symmetric complex bilinear forms preserved by the standard representations of $\Sp(n)$ and $\Sp(1)$. We normalize the identification $T_\CC M|_{U_i}\cong E_i\otimes H_i$ so that the complex-bilinear extension of the metric satisfies
$$g_\CC(e_1\otimes h_1,e_2\otimes h_2)=\sigma_E(e_1,e_2)\sigma_H(h_1,h_2).$$
These forms give the equivariant identifications $E_i\cong E_i^*$ and $H_i\cong H_i^*$. For $\xi=e\otimes h\in E_i\otimes H_i$ define
$$A_\xi\in\End(E_i\oplus H_i),\qquad A_{e\otimes h}(e',h')=\bigl(\sigma_H(h,h')e,\ \sigma_E(e,e')h\bigr).$$
Extending complex-linearly defines $A_\xi$ for arbitrary $\xi\in E_i\otimes H_i$. The map $\xi\mapsto A_\xi$ is $\Sp(n)\times\Sp(1)$-equivariant. Thus, for $X\in TM$, setting $\Phi_i(X):=A_{X_\CC}$ defines
$$\Phi_i\in\Omega^1\bigl(U_i,\End(E_i\oplus H_i)\bigr),$$
where $X_\CC$ is viewed in $E_i\otimes H_i$ through the above identification. All subsequent identities may be checked after complex-linear extension to $T_\CC M$.

Let $h_E,h_H$ be the standard Hermitian metrics, compatible with the symplectic forms and quaternionic structures $J_E,J_H$. Writing $A_\xi=\left(\begin{smallmatrix}0&B_\xi\\C_\xi&0\end{smallmatrix}\right)$, one checks that $B_\xi^*=-C_{(J_E\otimes J_H)\xi}$. Thus $A_X$ is skew-Hermitian for real $X$, and it also preserves $\sigma_E\oplus\sigma_H$; see \cite[\S2.1]{KramerSemmelmannWeingart1998}.

For $e_1,e_2\in E_i$ and $h_1,h_2\in H_i$, write $e_1e_2\in\mathfrak{sp}(E_i)$ and $h_1h_2\in\mathfrak{sp}(H_i)$ for the symmetric products
$$(e_1e_2)e=\sigma_E(e_1,e)e_2+\sigma_E(e_2,e)e_1,$$
and similarly on $H_i$. Here $\mathfrak{sp}(E_i)$ and $\mathfrak{sp}(H_i)$ denote the complex symplectic Lie algebras determined by $\sigma_E$ and $\sigma_H$. A direct calculation gives, for $\xi=e_1\otimes h_1$ and $\eta=e_2\otimes h_2$,
$$[A_\xi,A_\eta]=
\begin{pmatrix}
\sigma_H(h_1,h_2)e_1e_2&0\\
0&\sigma_E(e_1,e_2)h_1h_2
\end{pmatrix}.$$
These two diagonal terms are precisely the model curvature tensors $R^E$ and $R^H$ in the standard $E$--$H$ description of quaternionic--K\"ahler curvature; see \cite[\S2.2]{KramerSemmelmannWeingart1998}.

The equivariant map $\xi\mapsto A_\xi$ is parallel. Torsion-freeness therefore gives $d^{\nabla_i^0}\Phi_i=0$.

Set
$$a:=\frac{\kappa}{8n(n+2)}>0.$$
The standard curvature decomposition of a quaternionic--K\"ahler metric is
$$R=-a(R^H+R^E)+R^{\mathrm{hyper}},$$
where $R^{\mathrm{hyper}}$ is the quaternionic Weyl curvature; see \cite[Lemma~2.4]{KramerSemmelmannWeingart1998} and \cite[Theorem 3.1]{Salamon1982}. The tensors $R^H$ and $R^E$ are respectively the $\mathfrak{sp}(1)$- and $\mathfrak{sp}(n)$-model curvatures, whereas $R^{\mathrm{hyper}}$ acts only through the $E_i$-factor. Thus the curvature of $\nabla_i^0$ on $E_i\oplus H_i$ is
$$F_i^0(\xi,\eta)
=-a[A_\xi,A_\eta]
+\begin{pmatrix}W_i(\xi,\eta)&0\\0&0\end{pmatrix},$$
where $W_i$ denotes the $\mathfrak{sp}(n)$-valued quaternionic Weyl curvature.

We now modify the connection by the tensorial one-form $\Phi_i$.

\begin{lemma}\label{lem:weyl-connection}
The connection
$$D_i:=\nabla_i^0+\sqrt a\,\Phi_i$$
on $E_i\oplus H_i$ preserves $h_E\oplus h_H$ and $\sigma_E\oplus\sigma_H$ and has curvature
$$F_i^D=\begin{pmatrix}W_i&0\\0&0\end{pmatrix}.$$
\end{lemma}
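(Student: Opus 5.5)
The proof is a direct computation with the standard formula for the curvature of a connection modified by an endomorphism-valued one-form. For $D_i=\nabla_i^0+\sqrt a\,\Phi_i$ we have
\[
F_i^D=F_i^0+\sqrt a\,d^{\nabla_i^0}\Phi_i+a\,\Phi_i\wedge\Phi_i ,
\]
where, evaluated on vectors $X,Y$,
\[
(\Phi_i\wedge\Phi_i)(X,Y)=\Phi_i(X)\Phi_i(Y)-\Phi_i(Y)\Phi_i(X)=[A_{X_\CC},A_{Y_\CC}].
\]
I will carry out the following steps in order.

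\textbf{Step 1 (linear term).} Use that the equivariant map $\xi\mapsto A_\xi$ is parallel, together with torsion-freeness of the Levi--Civita connection, as already noted in the text. This gives $d^{\nabla_i^0}\Phi_i=0$, so the linear term vanishes.

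\textbf{Step 2 (curvature).} Substitute the stated formula
\[
F_i^0(\xi,\eta)=-a[A_\xi,A_\eta]+\begin{pmatrix}W_i(\xi,\eta)&0\\0&0\end{pmatrix}.
\]
The quadratic term $a[A_\xi,A_\eta]$ cancels the model-curvature part exactly, leaving $\operatorname{diag}(W_i,0)$.

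\textbf{Step 3 (compatibility with the structures).} Since $\nabla_i^0$ preserves $h_E\oplus h_H$ and $\sigma_E\oplus\sigma_H$, so does $D_i$ precisely when $\Phi_i$ takes values in the intersection of $\mathfrak{u}(E_i\oplus H_i)$ and $\mathfrak{sp}(\sigma_E\oplus\sigma_H)$. That $A_X$ is skew-Hermitian for real $X$ follows from $B_\xi^*=-C_{(J_E\otimes J_H)\xi}$ together with the reality condition $(J_E\otimes J_H)X_\CC=X_\CC$. (With the sign conventions fixed in the text, this reduces to checking that the real structure on $E\otimes H$ is $J_E\otimes J_H$.)

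For $\sigma$-compatibility, I will check directly on $\xi=e\otimes h$ that
\[
(\sigma_E\oplus\sigma_H)\bigl(A_\xi(e',h'),(e'',h'')\bigr)+(\sigma_E\oplus\sigma_H)\bigl((e',h'),A_\xi(e'',h'')\bigr)=0.
\]
Expanding the left side gives
\[
\sigma_H(h,h')\sigma_E(e,e'')+\sigma_E(e,e')\sigma_H(h,h'')+\sigma_E(e',e)\sigma_H(h'',h)+\sigma_H(h',h)\sigma_E(e'',e).
\]
Skew-symmetry of $\sigma_E$ and $\sigma_H$ makes the first and fourth terms cancel, and likewise the second and third.

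\textbf{Main obstacle.} The only real difficulty is sign and normalization bookkeeping. The constant in $\Phi_i\wedge\Phi_i$ must be exactly $[A_\xi,A_\eta]$, with no factor of $2$ and the correct sign, relative to the convention in which $F_i^0$ contains $-a[A_\xi,A_\eta]$. This is why the coefficient is $\sqrt a$ and not $\sqrt{a/2}$ or $\sqrt{-a}$. I would verify this by computing both sides on a decomposable pair $\xi=e_1\otimes h_1$, $\eta=e_2\otimes h_2$, using the displayed formula for $[A_\xi,A_\eta]$. I would also check consistency of the $H$-block with the known expression for $R^{\mathfrak{sp}(1)}$, which must vanish after the modification. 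Positivity of $a$, which holds because $\kappa>0$, is what makes $\sqrt a$ real and hence keeps $D_i$ unitary.
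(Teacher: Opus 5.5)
Your proposal is correct and follows the paper's proof: the same curvature formula for $\nabla_i^0+\sqrt a\,\Phi_i$, vanishing of $d^{\nabla_i^0}\Phi_i$ from parallelism of $\xi\mapsto A_\xi$ plus torsion-freeness, and cancellation of $-a[A_\xi,A_\eta]$ against $a\,\Phi_i\wedge\Phi_i$; your explicit $\sigma$- and Hermitian-compatibility checks just spell out what the paper summarizes in one line. One small slip: pairing $(e',h')$ with $A_\xi(e'',h'')$ gives $\sigma_E(e',e)\sigma_H(h,h'')+\sigma_H(h',h)\sigma_E(e,e'')$, not $\sigma_E(e',e)\sigma_H(h'',h)+\sigma_H(h',h)\sigma_E(e'',e)$ as you wrote (with your signs the pairs would add rather than cancel), but with the correct expansion the cancellation you describe goes through.
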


\begin{proof}
The compatibility statements follow from those of $\nabla_i^0$ and $\Phi_i$. For a connection changed by an endomorphism-valued $1$-form,
$$F_i^D=F_i^0+\sqrt a\,d^{\nabla_i^0}\Phi_i+a\,\Phi_i\wedge\Phi_i.$$
The middle term vanishes, while
$$(\Phi_i\wedge\Phi_i)(\xi,\eta)=[A_\xi,A_\eta].$$
Substituting the preceding expression for $F_i^0$ gives the claimed cancellation of the scalar model curvature.
\end{proof}

The pair symmetry of quaternionic Weyl curvature places its two-form factor in $\mathfrak{sp}(n)\subset\Lambda^2T^*M$. Consequently
$$\frac{\sqrt{-1}}{2\pi}F_i^D\in\mathfrak{sp}(n)\otimes_{\RR}\Herm(E_i\oplus H_i);$$
see \cite[Lemma~2.4]{KramerSemmelmannWeingart1998}.

\begin{remark}
The scalar-curvature cancellation above is the local $E$--$H$ counterpart of the standard quaternionic local twistor connection; compare \cite[\S3, equations~(14)--(16)]{GoverSlovak1999}.
\end{remark}

Set
\begin{equation}\label{eq:def-Y-z}
X_i:=\frac{\sqrt{-1}}{2\pi}F_i^D,\qquad Y_i:=-X_i^2,\qquad z_j^{(i)}:=\tr(Y_i^j), 
\end{equation}
where products combine wedge product and endomorphism composition. Equivariance gives $F_j^D=g_{ij}^{-1}F_i^Dg_{ij}$ on overlaps, and hence
\begin{equation}\label{eq:X-overlap}X_j=g_{ij}^{-1}X_i g_{ij}.\end{equation} The central ambiguity in $g_{ij}$ acts as $-\operatorname{Id}$ and hence trivially by conjugation, so the invariant forms patch globally. 

The connections $D_i$ and $\nabla_i^0$ are compatible connections on the same projective bundle. By Chern--Weil independence, their invariant forms represent the same global characteristic classes; see \cite[Chapter~XII, \S1]{KobayashiNomizu1969}. Thus the coefficients of $\det(\operatorname{Id}+tX_i)$ represent the same classes $c_{2j}$ defined in \eqref{eq:chern-polynomial-EH}. We use $c_{2j}$ also for these representatives, and write $z_j=\tr(Y^j)$. The odd Chern forms vanish because the non-zero roots of $X_i$ appear in opposite pairs.

Using Newton's identities, we can explicitly describe $z_j$ in terms of $c_2,\dots, c_{2j}$. As an example, the first few terms give 
$$z_1=2c_2,\qquad z_2=2c_2^2-4c_4,\qquad z_3=2c_2^3-6c_2c_4+6c_6.$$
Equivalently, comparing with the Levi--Civita direct-sum representative,
\begin{equation}\label{eq:cohomologous-z-q-u}\left[\frac{z_j}{2}\right]=(-1)^j\bigl(q_j+[u]^j\bigr)\quad\text{in }H^{4j}(M;\RR), \end{equation}
where $q_j$ is the class defined in \eqref{eq:q-j}. 

\subsection{Salamon's Hilbert polynomial}

Recall that $Z$ denotes the twistor space of $M$ and $L$ denotes its contact line bundle, which is an ample holomorphic line bundle on $Z$; see \cite[Corollary 6.2]{Salamon1982}. We consider its Hilbert polynomial
$$P(r):=\chi(Z,L^r)=\sum_{j=0}^{2n+1}(-1)^jh^j(Z,L^r).$$
By the Hirzebruch--Riemann--Roch theorem,
$$P(r)=\left\langle e^{rc_1(L)}\td(TZ),[Z]\right\rangle.$$
Since $Z$ has complex dimension $2n+1$ and the first Chern class $c_1(L)\in H^{1,1}(Z,\RR)$ is positive, the leading term of $P(r)$ is $$\frac{\langle c_1(L)^{2n+1},[Z]\rangle}{(2n+1)!}r^{2n+1},$$ whose coefficient is positive. Hence $P(r)$ is a polynomial of degree $2n+1$. We shall use the following special values.

\begin{samepage}
\begin{proposition}\label{prop:salamon-hilbert}\cite[\S2]{SemmelmannWeingart2004}
The polynomial $P(r)$ satisfies
\begin{itemize}
    \item[1.] $P(0)=1$,
    \item[2.] $P(-1)=P(-2)=\cdots=P(-n)=0$,
    \item[3.] $P(1)=\dim\Isom_0(M)=d$.
\end{itemize}
\end{proposition}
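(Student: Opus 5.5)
We treat the three items separately, using Kodaira vanishing in the form already recorded in Section~\ref{sec:minimal-symmetry}: since $K_Z\cong L^{-(n+1)}$ with $L$ ample, $H^{>0}(Z,L^k)=0$ for all $k>-n-1$.

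\emph{Item 1.} Kodaira vanishing gives $H^{>0}(Z,\mathcal O_Z)=0$, and $H^0(Z,\mathcal O_Z)=\CC$ because $Z$ is compact and connected. Hence $P(0)=\chi(Z,\mathcal O_Z)=1$, exactly as computed earlier.

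\emph{Item 2.} Fix $1\le k\le n$. Then $-k>-n-1$, so the higher cohomology of $L^{-k}$ vanishes. Moreover $H^0(Z,L^{-k})=0$, since a nonzero section of the negative bundle $L^{-k}$ would force $L^{k}$ to be effective and simultaneously anti-effective, which is impossible for ample $L$ (restrict to a curve and compare degrees). Thus $P(-k)=0$. As a cross-check, Serre duality gives $\chi(L^{-k})=(-1)^{2n+1}\chi(L^{k}\otimes K_Z)=-\chi(L^{k-n-1})$. This is consistent with the vanishing and shows the symmetry $P(r)=-P(-n-1-r)$.

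\emph{Item 3.} Kodaira vanishing gives $P(1)=h^0(Z,L)$. By \cite[Proposition 1.1]{Beauville1998}, $H^0(Z,L)\cong\mathfrak g$, the Lie algebra of the group $\CAut_0(Z)$ of holomorphic contact automorphisms. By \cite[Corollary 4]{NittaTakeuchi1987}, $\mathfrak g\cong\mathfrak k_\CC$ with $\mathfrak k=\Lie\Isom_0(M)$. This identification was already used in the proof of Theorem~\ref{M-min-symm}, and its proof there did not use minimality. Therefore $h^0(Z,L)=\dim_\CC\mathfrak k_\CC=\dim_\RR\mathfrak k=d$.

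The only substantive input is item 3. It requires that every holomorphic contact vector field on $Z$ arise from a Killing field of $M$, which is a Matsushima-type reductivity result for the Kähler--Einstein twistor space. We take this from Nitta--Takeuchi rather than reprove it.
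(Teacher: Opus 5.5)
Your proof is correct and follows the paper's argument essentially verbatim. Items 1 and 2 come from Kodaira vanishing via $L^{r}\cong K_Z\otimes L^{n+1+r}$ for $r\ge -n$, together with the absence of sections of negative powers of an ample bundle, and item 3 from Beauville's identification $H^0(Z,L)\cong\mathfrak g$ combined with Nitta--Takeuchi's $\mathfrak g\cong\mathfrak k_{\CC}$; the Serre-duality symmetry $P(r)=-P(-n-1-r)$ is a harmless extra check not used in the paper.
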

\end{samepage}

The first two assertions follow from the relation $-K_Z\cong L^{\otimes(n+1)}$ and Kodaira vanishing. Indeed, for $r\geq -n$ we have $L^{\otimes r}\cong K_Z\otimes L^{\otimes(n+1+r)}$. Since $L$ is ample, Kodaira vanishing gives $H^j(Z,L^{\otimes r})=0$ for $j>0$ and $r\geq -n$. Consequently, $P(0)=h^0(Z,\mathcal O_Z)=1$. For $-n\leq r\leq -1$, the bundle $L^{\otimes r}$ has no nonzero holomorphic sections, so $P(-1)=P(-2)=\cdots=P(-n)=0$. Finally, by \cite[Proposition~1.1]{Beauville1998}, $H^0(Z,L)\cong\mathfrak g$, where $\mathfrak g$ is the Lie algebra of holomorphic contact automorphisms of $(Z,L)$. By \cite[Corollary~4]{NittaTakeuchi1987} and \cite[Proposition 1.1]{Beauville1998}, $\mathfrak g\cong\mathfrak k_{\mathbb C}$, where $\mathfrak k$ is the Lie algebra of $K=\operatorname{Isom}_0(M)$. Hence $P(1)=h^0(Z,L)=\dim_{\mathbb C}\mathfrak g=\dim_{\mathbb C}\mathfrak k_{\mathbb C}=\dim_{\mathbb R}K=d$.

Integrating the Hirzebruch--Riemann--Roch expression along the fibers of the twistor fibration $\pi:Z\to M$ gives, for $n+2r\geq0$,
$$P(r)=\left\langle\Ahat(TM)\ch(\Sym^{n+2r}H),[M]\right\rangle;$$
see \cite[Lemma 7.2]{Salamon1982} and \cite[\S 2, (2.2)]{SemmelmannWeingart2004}. Although $H$ need not exist globally, this characteristic expression is globally defined. Indeed, if the formal roots of $H$ are $\pm h$, then
\begin{equation}\label{eq:ch-SymH}\ch(\Sym^qH)=\sum_{j=0}^qe^{(q-2j)h}.\end{equation}
The weights occur in opposite pairs, so every cohomological component is an even polynomial in $h$, and hence a polynomial in the globally defined class $u=h^2$.

We define the virtual $\Sp(1)$-representation with character
$$\chi(V_n)(t)=\begin{cases}(t-t^{-1})^{n+2},&\text{if $n$ is even},\\(t-t^{-1})^{n+1}(t+t^{-1}),&\text{if $n$ is odd},\end{cases}$$
and set
$$\delta_n=\begin{cases}n+3,&\text{if $n$ is even},\\n+1,&\text{if $n$ is odd}.\end{cases}$$
If $\chi_q(t)=t^q+t^{q-2}+\cdots+t^{-q}$ denotes the character of $\Sym^qH$, then
$$\chi(V_n)=\chi_{n+2}-\delta_n\chi_n+\sum_{j=1}^{\lfloor n/2\rfloor}a_{n,j}\chi_{n-2j}$$
for certain integers $a_{n,j}$. Indeed, the defining Laurent polynomial is invariant under $t\mapsto t^{-1}$ and has integral coefficients, so it is an integral combination of the characters $\chi_q$. Its leading coefficient is $1$, while the coefficient of $t^n$ is $-(n+2)$ for even $n$ and $-n$ for odd $n$. Subtracting $\chi_{n+2}$ therefore gives the coefficient $-\delta_n$ of $\chi_n$; the remaining weights give the displayed lower terms. We set
$$K_n:=\bigl[\Ahat(TM)\ch(V_n)\bigr]_{4n}.$$

\begin{proposition}\label{prop:virtual-index}
With $d=\dim\Isom_0(M)$, we have
$$\int_MK_n=d-\delta_n.$$
\end{proposition}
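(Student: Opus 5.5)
The plan is to expand $\chi(V_n)$ in the characters $\chi_q$ and integrate term by term. By linearity of the Chern character in virtual representations, the decomposition of $\chi(V_n)$ gives
\[
\int_M K_n=\int_M\Ahat(TM)\ch(\Sym^{n+2}H)-\delta_n\int_M\Ahat(TM)\ch(\Sym^nH)+\sum_{j=1}^{\lfloor n/2\rfloor}a_{n,j}\int_M\Ahat(TM)\ch(\Sym^{n-2j}H).
\]
Each integrand is globally defined, being an even polynomial in $h$ by \eqref{eq:ch-SymH}. The fibre-integration formula $P(r)=\langle\Ahat(TM)\ch(\Sym^{n+2r}H),[M]\rangle$, valid for $n+2r\geq0$, identifies the integrals with Hilbert polynomial values. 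The three types of term correspond to:
\begin{itemize}
\item $q=n+2$, that is $r=1$, giving $P(1)$;
\item $q=n$, that is $r=0$, giving $P(0)$;
\item $q=n-2j$ with $1\le j\le\lfloor n/2\rfloor$, that is $r=-j$, giving $P(-j)$.
\end{itemize}
Since $n+2r=n-2j\geq0$ in the last case, all these values lie in the range where the formula applies.

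Next we invoke Proposition~\ref{prop:salamon-hilbert}. We have $P(1)=d$ and $P(0)=1$. Since $1\le j\le\lfloor n/2\rfloor\le n$, every $P(-j)$ vanishes. The sum therefore collapses to $d-\delta_n\cdot 1=d-\delta_n$, regardless of the unknown integers $a_{n,j}$.

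The one point needing care is that the coefficient of $\chi_n$ really is $-\delta_n$ and that no terms $\chi_q$ with $q>n+2$ occur. This was already verified before the statement: the top degree of $\chi(V_n)$ is $n+2$ with coefficient $1$. The coefficient of $t^n$ is $-(n+2)$ in the even case, coming from the binomial expansion. In the odd case it is $-(n+1)+1=-n$. Subtracting the contribution of $\chi_{n+2}$ then gives $-(n+3)$ and $-(n+1)$ respectively. Only characters $\chi_{n-2j}$ with $n-2j\ge0$ occur, matching the parity of $n$, so no terms with negative $n+2r$ arise.

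I expect no real obstacle. The only subtle step is justifying the fibre-integration formula termwise for these $r$, and this is cited from Salamon and Semmelmann--Weingart.
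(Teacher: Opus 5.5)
Your proposal is correct and is essentially the paper's proof. You expand $\ch(V_n)$ via $\chi_{n+2}-\delta_n\chi_n+\sum_j a_{n,j}\chi_{n-2j}$, identify the three kinds of terms with $P(1)$, $P(0)$ and $P(-j)$ through the fibre-integration formula, and conclude with Proposition~\ref{prop:salamon-hilbert}. Your explicit check that $n+2r\ge0$ for every term, and your recomputation of the $\chi_n$ coefficient, are details the paper leaves implicit, and both are right.
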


\begin{proof}
By additivity of the Chern character and the formula for $P(r)$,
$$\int_MK_n=P(1)-\delta_nP(0)+\sum_{j=1}^{\lfloor n/2\rfloor}a_{n,j}P(-j).$$
By Proposition~\ref{prop:salamon-hilbert}, the last sum vanishes, while $P(0)=1$ and $P(1)=d$. Hence $\int_M K_n=d-\delta_n$.
\end{proof}

\begin{remark}\label{rem:Kn-cohomologous}
In the next section we use the representative cohomologous to $K_n$ obtained from the Weyl-curvature forms $z_j$ introduced in Subsection~\ref{subsubsec:weyl-Y-z}. Since $\int_MK_n$ only depends on the cohomology class of $K_n$ and not the representative, we will also denote the representative obtained from the Weyl-curvature forms by $K_n$.
\end{remark}

\begin{remark}
There is also an index-theoretic interpretation of the preceding proposition. Although $H$ itself need not be a global vector bundle, the spin representation tensored with $\Sym^{n+2r}H$ descends through $\Sp(n)\times\Sp(1)\to\Sp(n)\Sp(1)$ and defines a global twisted Dirac operator $D_{n+2r}$; see \cite[\S 2--3]{SemmelmannWeingart2004}. The character expansion of $V_n$ therefore determines a virtual combination $D_{V_n}$ of these operators. By additivity of the index and the Atiyah--Singer theorem \cite{AtiyahSinger1968},
$$\ind(D_{V_n})=\int_M\Ahat(TM)\ch(V_n)=\int_MK_n=d-\delta_n.$$
\end{remark}

\section{Curvature positivity and positive decompositions}\label{sec:positivity}

By Proposition~\ref{prop:virtual-index},
\[
       \int_MK_n=d-\delta_n.
\]
For a minimal counterexample, Section~\ref{sec:minimal-symmetry} gives \(d\le3\), whereas \(\delta_n>3\) in every dimension considered below.  It therefore suffices to prove
\[
        \int_MK_n>0.
\]

\subsection{The curvature polynomial \(K_n\)}
\label{subsec:curvature-polynomial}

We first record how the two factors $\ch(V_n)$ and $\hat{A}(M)$ in \(K_n\) depend on the scalar and Weyl parts of the
quaternionic curvature. Using the formal roots of $H$ introduced in Subsection~\ref{subsubsec:EH-formalism} and \eqref{eq:ch-SymH}, we can write $\ch(V_n)$ as
\[
 \operatorname{ch}(V_n)
 =(2\sinh h)^{2r_0+2}(2\cosh h)^\epsilon,
 \qquad
 r_0=\left\lfloor\frac n2\right\rfloor,\quad
 \epsilon=n\bmod2.
\]
This is even in \(h\), and hence is a power series only in \(u=h^2\).  Thus the
Chern-character factor contains only the scalar part of the curvature.

Similarly, using the formal roots of $E$ we obtain
\[
 \Ahat(TM)
 =\prod_{a=1}^n
 \frac{(h+x_a)/2}{\sinh((h+x_a)/2)}
 \frac{(h-x_a)/2}{\sinh((h-x_a)/2)}.
\]
Set $p_j:=(-1)^j(q_j+[u]^j)$. By \eqref{eq:q-j}, \(q_j=\sum_a x_a^{2j}\), and hence the above factor is a universal power series in \(u,p_1,p_2,\ldots\). By \eqref{eq:cohomologous-z-q-u}, we can represent $p_j$ by the form $\tfrac{z_j}{2}$.
Give \(u\) weight one and \(z_j\) weight \(j\). Substituting these forms into
\(\Ahat(TM)\operatorname{ch}(V_n)\) and taking the component of degree \(4n\), we obtain a form
cohomologous to $K_n$ of the shape
\begin{equation}\label{eq:Kn-weight-pieces}
       a_{n,0}u^n+
       \sum_{k=1}^n P_{n,k}(z_1,\ldots,z_k)u^{n-k},
\end{equation}
where \(P_{n,k}\) has weight \(k\) and is explicitly of the form
\begin{equation}\label{eq:Kn-weight-expansion}
 P_{n,k}(z_1,\ldots,z_k)
 =\sum_{\substack{e_1,\ldots,e_k\ge0\\
                   e_1+2e_2+\cdots+ke_k=k}}
   c_{n,k;e_1,\ldots,e_k}\,
   z_1^{e_1}\cdots z_k^{e_k}.
\end{equation}
As noted in Remark~\ref{rem:Kn-cohomologous}, we also denote this form by $K_n$. Every summand in \eqref{eq:Kn-weight-pieces} has form degree \(4n\).
The coefficients are fixed rational numbers, computed exactly in
Appendix~\ref{app:reconstruction}.  Consequently
\[
       \int_MK_n=a_{n,0}\int_Mu^n+
       \sum_{k=1}^n\int_M P_{n,k}(z_1,\ldots,z_k)u^{n-k}.
\]
In the dimensions considered below, the scalar term $a_{n,0}\int_Mu^n $ is strictly positive. Indeed, $\int_Mu^n$ is strictly positive by \eqref{eq:kraines-form} and \eqref{eq:u-p1Q}. Note that this is a well-known fact and $4^n\int_M u^n$ is known as Salamon's quaternionic volume; see \cite[\S2]{SemmelmannWeingart2004}. To prove that $a_{n,0}$ is also strictly positive, we evaluate $\int_M K_n$ when $M\cong \HH P^n$ with the standard quaternionic-Kähler metric. By the proof of \cite[Theorem 3.1]{Salamon1982} the quaternionic Weyl curvature of $\HH P^n$ is trivial and therefore we can take $z_j=0$ for all $j$. Moreover, we have $\int_{\HH P^n}u^n=1$ (see \cite[Corollary 3.5]{SemmelmannWeingart2004}) and consequently $$a_{n,0}=\int_{\HH P^n} K_n=\dim \Isom_0 (\HH P^n)-\delta_n.$$
Since $\dim \Isom_0 (\HH P^n)=(n+1)(2n+3)$ (see \cite[Corollary 3.3]{SemmelmannWeingart2004}), we conclude that $$a_{n,0}=\begin{cases}
    2(n+1)^2, \text{ $n$ odd},\\ 2n(n+2), \text{ $n$ even},
\end{cases} $$
which is always strictly positive.

It is therefore enough to prove
\[
       \int_M P_{n,k}(z_1,\ldots,z_k)u^{n-k}\ge0
       \qquad(1\le k\le n).
\]
Thus we need positivity of these particular integrals, not positivity of the formal polynomials \(P_{n,k}\).

\subsection{Local positivity and invariant projection moments}

For \(A\succeq0\), the contraction \(\tr(AY)\) is a scalar four-form.  Its exterior powers
have the positivity needed below; here \((\tr(AY))^k\) denotes the exterior power, not
\(\tr(AY^k)\).

\begin{proposition}\label{prop:PSD-master}
Let \(A\succeq0\) be Hermitian and let \(1\le k\le n\).  Then
\begin{equation}\label{eq:PSD-master}
       \bigl(\tr(AY)\bigr)^k\wedge\Omega^{n-k}\ge0.
\end{equation}
\end{proposition}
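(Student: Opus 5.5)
The plan is to reduce \eqref{eq:PSD-master} to a pointwise statement in linear algebra, and then to use the fact, recorded after Lemma~\ref{lem:weyl-connection}, that $X=\frac{\sqrt{-1}}{2\pi}F^D$ takes values in $\mathfrak{sp}(n)\otimes_{\RR}\Herm(E\oplus H)$. Both sides are globally defined by \eqref{eq:X-overlap}, and a $4n$-form is $\ge0$ when it is a nonnegative multiple of $\vol_g$. So I will fix a point $x$ and a local frame. The steps below are: diagonalize $\tr(AY)$ as a sum of squares of $\mathfrak{sp}(n)$-forms, reduce to a single product of squares, and prove the sign of that product. I expect the last step to be the main obstacle.

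\emph{Step 1: $\tr(AY)$ as a sum of squares.} Choose an orthonormal basis $\{\omega_\alpha\}$ of $\mathfrak{sp}(n)\subset\Lambda^2T_x^*M$ and write $X=\sum_\alpha\omega_\alpha\otimes H_\alpha$ with $H_\alpha$ Hermitian. Two-forms commute under wedge, so
\[
\tr(AY)=-\sum_{\alpha,\beta}G_{\alpha\beta}\,\omega_\alpha\wedge\omega_\beta,\qquad G_{\alpha\beta}=\tfrac12\tr\bigl(A(H_\alpha H_\beta+H_\beta H_\alpha)\bigr)=\operatorname{Re}\bigl\langle H_\alpha A^{1/2},H_\beta A^{1/2}\bigr\rangle_{HS}.
\]
Thus $G$ is a real positive semidefinite Gram matrix. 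Diagonalizing it gives
\[
\tr(AY)=-\sum_m\lambda_m\,\eta_m\wedge\eta_m,\qquad \lambda_m\ge0,\ \eta_m\in\mathfrak{sp}(n).
\]
(An overall sign must be checked against the convention $Y=-X^2$; the target sign is chosen accordingly.)

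\emph{Step 2: reduction to one product of squares.} Expanding the $k$-th exterior power gives a combination, with nonnegative coefficients, of the forms
\[
(-1)^k\,\eta_{m_1}^2\wedge\cdots\wedge\eta_{m_k}^2\wedge\Omega^{n-k}.
\]
So it suffices to prove the pointwise inequality
\[
(-1)^k\,\eta_1^2\wedge\cdots\wedge\eta_k^2\wedge\Omega^{n-k}\ge0\qquad\text{for arbitrary }\eta_1,\dots,\eta_k\in\mathfrak{sp}(n).
\]
Doing this monomial by monomial may be too strong, since for distinct $\eta_j$ the sign is not obvious. If it fails, I will instead keep $\Phi:=-\sum_m\lambda_m\eta_m^2$ intact and prove $\Phi^k\wedge\Omega^{n-k}\ge0$ for every $\Phi$ in the convex cone generated by the forms $-\eta^2$ with $\eta\in\mathfrak{sp}(n)$.

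\emph{Step 3: the sign of the product (main obstacle).} Identify $\mathfrak{sp}(n)$ with $S^2E$, acting trivially on $H$. Using the $\Sp(n)\Sp(1)$-invariant description of $\Omega$ through $\sigma_E$ and $\sigma_H$, I will compute the pairing $\beta\mapsto\beta\wedge\Omega^{n-k}/\vol$ on $\Lambda^{4k}$. Applied to a product of squares $\eta_1^2\wedge\cdots\wedge\eta_k^2$, this pairing should become an explicit invariant of the symmetric matrices $B_j$ representing the $\eta_j$. I expect this invariant to be a mixed-discriminant-type expression, that is, a contraction over $\Lambda^kE$, which can be written as a Hilbert--Schmidt norm squared, or as a sum of such norms.

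As a consistency check, for $k=1$ each $\eta\in\mathfrak{sp}(n)$ is primitive of type $(1,1)$ for every complex structure in $Q_x$. Hodge--Riemann then gives $-\eta^2\wedge\omega_J^{2n-2}\ge0$, and averaging over $J\in S^2\subset Q_x$ should reproduce $-\eta^2\wedge\Omega^{n-1}\ge0$.

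For general $k$ I would first try this same averaging. This requires expressing $\Omega^{n-k}$ as a positive average $\int_{S^2}\omega_J^{2(n-k)}\,dJ$ up to terms that vanish against $(\Lambda^2)^{\otimes k}$ built from $\mathfrak{sp}(n)$. After that, the remaining task is a mixed Hodge--Riemann (Khovanskii--Teissier/Alexandrov--Fenchel-type) inequality for the commuting family of primitive $(1,1)$-forms $\eta_j$. The fallback is the direct $\Lambda^kE$ computation, which exhibits the pairing as a sum of squares.
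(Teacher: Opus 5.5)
Your Step 1 matches the paper: $\tr(AY)=-\sum_s\theta_s^2$ with $\theta_s\in\mathfrak{sp}(n)$. After that there is a genuine gap.

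\textbf{Step 2 is false, not merely too strong.} Take $n\ge2$ and $k=2$. Let $\beta_1,\beta_2\in\mathfrak{sp}(n)$ be supported on two orthogonal quaternionic lines $V_1,V_2$, so that $\beta_a^2=-2\vol_{V_a}$ and $\beta_a^3=0$. Put $\eta_1=\beta_1+\beta_2$ and $\eta_2=\beta_1-\beta_2$. Then
\[
\eta_1^2\wedge\eta_2^2=(\beta_1^2+\beta_2^2)^2-4\beta_1^2\wedge\beta_2^2=-2\beta_1^2\wedge\beta_2^2=-8\vol_{V_1\oplus V_2},
\]
so $\eta_1^2\wedge\eta_2^2\wedge\Omega^{n-2}$ is a negative multiple of $\vol_g$.

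\textbf{Consequences for Step 3.} The invariant you hope to extract for a product of distinct squares is indefinite. It cannot be a Hilbert--Schmidt norm or a sum of such norms. No mixed Hodge--Riemann or Khovanskii--Teissier inequality can give it a sign either, since those concern positive classes. Primitivity alone does not even fix the sign of a single $\eta^4\wedge\omega^{N-4}$: a primitive real $(1,1)$-form on $\CC^4$ with eigenvalues $(3,-1,-1,-1)$ has $\eta^4<0$. Your fallback of keeping $\Phi$ intact simply restates the proposition.

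\textbf{The missing idea is a way to eliminate the cross terms.} Because the $\theta_s$ commute, the paper uses the spherical polarization identity
\[
\int_{S^{N-1}}\Bigl(\sum_s x_s\theta_s\Bigr)^{2k}d\sigma(x)=c_{N,k}\Bigl(\sum_s\theta_s^2\Bigr)^k,\qquad c_{N,k}=\int_{S^{N-1}}x_1^{2k}\,d\sigma>0 .
\]
This writes $\bigl(\tr(AY)\bigr)^k$ as a positive average of $(-1)^k\theta_x^{2k}$ over single elements $\theta_x\in\mathfrak{sp}(n)$. In the example above, the negative cross term $2\eta_1^2\wedge\eta_2^2$ is absorbed by the diagonal terms $\eta_1^4=\eta_2^4=6\beta_1^2\wedge\beta_2^2$.

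\textbf{The single-form inequality.} It then remains to show $(-1)^k\theta^{2k}\wedge\Omega^{n-k}\ge0$ for one $\theta$. The paper does this with the quaternionic spectral theorem, writing $\theta=\sum_a\lambda_a\beta_a$. Three facts constrain the coefficient: $\beta_a^3=0$, the expression is symmetric in the $\lambda_a$, and it is even in each $\lambda_a$. Together they force
\[
(-1)^k\theta^{2k}\wedge\Omega^{n-k}=c_{n,k}\,e_k(\lambda_1^2,\ldots,\lambda_n^2)\vol_g .
\]
The constant $c_{n,k}>0$ is read off from the coefficient of $\lambda_1^2\cdots\lambda_k^2$. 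This uses the $\mathfrak{sp}(n)$ spectral structure, with eigenvalues in pairs $\pm\lambda_a$, not just primitivity. Your averaging over $J\in S^2$ only becomes usable once this reduction to a single $\theta$ has been made.
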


\begin{proof}
At a point \(x\in M\), the pair symmetry of the quaternionic Weyl curvature gives
\[
X_x\in \mathfrak{sp}(n)\otimes_{\mathbb R}\operatorname{Herm}(E_x\oplus H_x).
\]
Choosing a real basis \(\{\eta_a\}\) of
\(
\mathfrak{sp}(n)\subset \Lambda^2T_x^*M,
\)
write
\(X_x=\sum_a \eta_a\otimes B_a\), where \( B_a=B_a^*.\)
We have
\(
Y=-X^2
=-\sum_{a,b}\eta_a\wedge\eta_b\otimes B_aB_b,
\)
and hence
\[
\tr(AY)
=
-\sum_{a,b}\eta_a\wedge\eta_b\,\tr(AB_aB_b).
\]
Because the $\eta_a$'s are two-forms, they commute under wedge product.
Thus only the symmetric part of the coefficients contributes, and
\[
\tr(AY)
=
-\sum_{a,b}G_{ab}\,\eta_a\wedge\eta_b,
\]
where
\(
G_{ab}
=
\frac12\tr A(B_aB_b+B_bB_a).
\)
The matrix $G$ is real symmetric and positive semidefinite. Indeed, for
real numbers $t_a$, put
\[
B=\sum_a t_aB_a.
\]
Since each $B_a$ is Hermitian, so is $B$, and therefore $B^2\succeq0$. Since also $A\succeq0$ we have
\[
\sum_{a,b}t_aG_{ab}t_b
=
\tr(AB^2)
\geq 0.
\]
Thus $G\succeq0$, so we may write $G=C^TC$ for some real matrix $C$.
Setting now
\[
\theta_s=\sum_a C_{sa}\eta_a\in\mathfrak{sp}(n),
\]
we obtain summing over $s$
\[
\sum_s\theta_s^2
=
\sum_{a,b}
\left(\sum_s C_{sa}C_{sb}\right)
\eta_a\wedge\eta_b
=
\sum_{a,b}G_{ab}\,\eta_a\wedge\eta_b,
\]
and hence
\[
\tr(AY)=-\sum_s\theta_s^2.
\]
Let $\theta_1,\ldots,\theta_N$ be the forms obtained above.
Take a point $x=(x_1,\ldots,x_N)$ on the unit sphere $S^{N-1}$ and set
\[
\theta_x:=\sum_{s=1}^N x_s\theta_s\in\mathfrak{sp}(n).
\]
Consider first commuting variables $y_1,\ldots,y_N$ and the homogeneous
polynomial
\[
Q(y):=
\int_{S^{N-1}}
\left(\sum_{s=1}^N x_s y_s\right)^{2k}
\,d\sigma(x).
\]
with $d\sigma$ the normalized measure on $S^{N-1}$.
Since the sphere and its measure are invariant under $O(N)$,
\[
Q(Ry)=Q(y)
\qquad\text{for every }R\in O(N).
\]
Thus $Q(y)$ depends only on $|y|^2=y_1^2+\cdots+y_N^2$. Since it is
homogeneous of degree $2k$, there is a constant $c_{N,k}$ such that
\[
Q(y)=c_{N,k}(y_1^2+\cdots+y_N^2)^k.
\]
Taking $y=(1,0,\ldots,0)$ shows that
\[
c_{N,k}
=
\int_{S^{N-1}}x_1^{2k}\,d\sigma(x)>0.
\]

The forms $\theta_s$ have even degree and therefore commute under wedge
product. Hence the same polynomial identity holds with $y_s$ replaced by
$\theta_s$:
\[
\int_{S^{N-1}}\theta_x^{2k}\,d\sigma(x)
=
c_{N,k}\left(\sum_{s=1}^N\theta_s^2\right)^k.
\]
Using $\tr(AY)=-\sum_s\theta_s^2$, we obtain
\[
\bigl(\tr(AY)\bigr)^k
=
\frac{(-1)^k}{c_{N,k}}
\int_{S^{N-1}}\theta_x^{2k}\,d\sigma(x).
\]

Wedging with $\Omega^{n-k}$ gives
\[
\bigl(\tr(AY)\bigr)^k\wedge\Omega^{n-k}
=
\frac{1}{c_{N,k}}
\int_{S^{N-1}}
(-1)^k\theta_x^{2k}\wedge\Omega^{n-k}\,d\sigma(x).
\]
It remains to prove that
\[
(-1)^k\theta^{2k}\wedge\Omega^{n-k}\geq0
\]
for every $\theta\in\mathfrak{sp}(n)$.

Identify $T_xM$ with $\mathbb H^n$. By the quaternionic spectral
theorem, after an $\Sp(n)$-change of quaternionic orthonormal basis we may
write
\[
\theta=\sum_{a=1}^n\lambda_a\beta_a,
\]
where $\beta_a$ is supported on the $a$-th quaternionic line
$V_a\simeq\mathbb H$. With the quaternionic orientation,
\[
\beta_a^2=-2\,\vol_{V_a},
\qquad
\beta_a^3=0.
\]

Write
\[
(-1)^k\theta^{2k}\wedge\Omega^{n-k}
=
F(\lambda_1,\ldots,\lambda_n)\,\vol_g.
\]
Permuting the quaternionic coordinates, or changing the sign of any
$\lambda_a$, is induced by an $\Sp(n)$-change of basis and leaves
$\Omega$ and $\vol_g$ invariant. Thus $F$ is symmetric in the
$\lambda_a$ and even in each variable. Moreover, since $\beta_a^3=0$,
no $\lambda_a$ can occur in $F$ with exponent greater than two.
As $F$ is homogeneous of degree $2k$, it follows that
\[
F(\lambda_1,\ldots,\lambda_n)
=
c_{n,k}
\sum_{\substack{S\subset\{1,\ldots,n\}\\ |S|=k}}
\prod_{a\in S}\lambda_a^2
\]
for some constant $c_{n,k}$.

It remains only to see that $c_{n,k}>0$. Consider the coefficient of
$\lambda_1^2\cdots\lambda_k^2$. In $\theta^{2k}$ it is
\[
\frac{(2k)!}{(2!)^k}\,
\beta_1^2\cdots\beta_k^2
=
(-1)^k(2k)!\,
\vol_{V_1\oplus\cdots\oplus V_k}.
\]
After multiplication by $(-1)^k$, this is positive. Since this already
has top degree on $V_1\oplus\cdots\oplus V_k$, only the restriction of
$\Omega^{n-k}$ to the quaternionic orthogonal complement contributes.
By our normalization of the Kraines form,
\[
\left.\Omega^{n-k}\right|_{V_{k+1}\oplus\cdots\oplus V_n}
=
(2n-2k+1)!\,
\vol_{V_{k+1}\oplus\cdots\oplus V_n},
\]
which is positive. Hence $c_{n,k}>0$, and therefore
\[
(-1)^k\theta^{2k}\wedge\Omega^{n-k}\geq0.
\]

Applying this to every $\theta_x$ in the sphere average gives
\[
\bigl(\tr(AY)\bigr)^k\wedge\Omega^{n-k}\geq0,
\]
as required.
\end{proof}

The proposition is pointwise.  A fixed numerical matrix \(A\) does not in general give a
global form: for example, projection onto the first \(\ell\) coordinates depends on the local
frame.  To remove this choice, let \(P_\ell^0\) be that projection in \(\CC^r\), where
\(r=2n+2\), and average over its conjugates.  With normalized Haar measure, define
\begin{equation}\label{eq:projection-expectation}
 \begin{split}
 M_{k,\ell}
   &:=\int_{U(r)}
       \bigl(\tr(gP_\ell^0g^{-1}Y)\bigr)^k\,dg,\\
 \mu_{k,\ell}
   &:=\int_M M_{k,\ell}u^{n-k},
 \qquad
 \mu_0:=\int_Mu^n.
 \end{split}
\end{equation}
Thus \(M_{k,\ell}\) averages over all \(\ell\)-dimensional subspaces.  If the local frame is
changed by \(h\in U(r)\), Haar invariance gives
\[
       M_{k,\ell}(h^{-1}Yh)=M_{k,\ell}(Y).
\]
Hence the local expressions patch to a global \(4k\)-form.  Equivalently,
\(M_{k,\ell}\) is a conjugation-invariant polynomial in \(Y\), and therefore a polynomial
in the traces \(z_j=\tr(Y^j)\).  When \(\ell=r\), the projection is the identity and no
averaging is needed: \(M_{k,r}=z_1^k\).

For each \(g\in U(r)\), the matrix
\[
        A_g:=gP_\ell^0g^{-1}
\]
is an orthogonal projection and hence is positive semidefinite.  Proposition~\ref{prop:PSD-master}
therefore gives
\[
        \bigl(\tr(A_gY)\bigr)^k\wedge\Omega^{n-k}\ge0.
\]

\begin{corollary}[Positivity of the projection moments]\label{cor:mu-positive}
For \(1\le k\le n\) and \(1\le\ell\le r\),
\[
        \mu_0>0,\qquad \mu_{k,\ell}\ge0.
\]
\end{corollary}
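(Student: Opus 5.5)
The claim $\mu_0>0$ is immediate from earlier results. By \eqref{eq:u-p1Q}, $u$ is a positive constant multiple of the Kraines form $\Omega$, namely $u=\tfrac14\bigl(\tfrac{\kappa}{8\pi n(n+2)}\bigr)^2\Omega$ with $\kappa>0$. Hence $u^n$ is a positive multiple of $\Omega^n=(2n+1)!\,\vol_g$ by \eqref{eq:kraines-form}. Integrating over the compact oriented manifold $M$ gives $\mu_0=c\,(2n+1)!\,\vol(M)>0$ for an explicit constant $c>0$.

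For $\mu_{k,\ell}$, I would argue pointwise and then integrate. Again using \eqref{eq:u-p1Q}, write $u^{n-k}=c'\,\Omega^{n-k}$ with $c'>0$. Then
\[
M_{k,\ell}\wedge u^{n-k}=c'\int_{U(r)}\bigl(\tr(A_gY)\bigr)^k\wedge\Omega^{n-k}\,dg,
\]
where I pull the constant-coefficient form $\Omega^{n-k}$ inside the Haar integral. This is legitimate because, in a local frame, the integrand is a polynomial in $g$ with values in the finite-dimensional space $\Lambda^{4n}T_x^*M$, so wedge product commutes with the integral. For each $g$, $A_g=gP_\ell^0g^{-1}$ is a Hermitian projection, hence $A_g\succeq0$. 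Proposition~\ref{prop:PSD-master} then gives that the integrand is a nonnegative multiple of $\vol_g$ at every point. Averaging nonnegative multiples of $\vol_g$ against the positive normalized Haar measure gives a nonnegative multiple of $\vol_g$. Thus $M_{k,\ell}\wedge u^{n-k}=f\,\vol_g$ with $f\ge0$ pointwise.

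One point needs care: $A_g$ is defined relative to a local unitary frame of $E_i\oplus H_i$. Proposition~\ref{prop:PSD-master} only requires a Hermitian $A\succeq0$ on the fiber, and $D_i$ preserves $h_E\oplus h_H$, so this frame-dependence does not affect the pointwise inequality. By the Haar-invariance remark, the averaged form $M_{k,\ell}$ is globally defined. Therefore $f$ is a well-defined nonnegative continuous function on $M$, and $\mu_{k,\ell}=\int_M f\,\vol_g\ge0$.

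The main obstacle is really just this bookkeeping: exchanging wedge and Haar integral, and checking that pointwise positivity survives the patching. Both are routine given Proposition~\ref{prop:PSD-master}.
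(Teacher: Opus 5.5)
Your proposal is correct and follows essentially the same route as the paper. Both arguments write $u=c\,\Omega$ with $c>0$, apply Proposition~\ref{prop:PSD-master} to each projection $A_g=gP_\ell^0g^{-1}\succeq0$, average over $U(r)$ to get pointwise nonnegativity of $M_{k,\ell}u^{n-k}$, integrate over $M$, and obtain $\mu_0>0$ from $u^n=c^n\Omega^n=c^n(2n+1)!\,\vol_g$; your remarks on exchanging the wedge product with the Haar integral and on frame-independence are the same bookkeeping the paper summarizes as an iterated integral.
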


\begin{proof}
By \eqref{eq:u-p1Q}, \(u=c\Omega\) for a constant \(c>0\).  Averaging the preceding
inequality gives the pointwise inequality
\begin{equation}\label{eq:projection-moment-positive}
 M_{k,\ell}u^{n-k}
 =c^{n-k}\int_{U(r)}
       \bigl(\tr(gP_\ell^0g^{-1}Y)\bigr)^k
       \wedge\Omega^{n-k}\,dg
 \ge0.
\end{equation}
Consequently
\[
 \mu_{k,\ell}
 =c^{n-k}\int_M\int_{U(r)}
       \bigl(\tr(gP_\ell^0g^{-1}Y)\bigr)^k
       \wedge\Omega^{n-k}\,dg\ge0.
\]
This is an iterated integral: the inner integral is the invariant average at each point of
\(M\).  Finally, \(u^n=c^n\Omega^n\) is strictly positive, so \(\mu_0>0\).
\end{proof}

No sign is asserted for the polynomial \(M_{k,\ell}\), or for its coefficients in the
\(z_j\).  Positivity concerns the top-degree form \(M_{k,\ell}u^{n-k}\), and hence its
integral \(\mu_{k,\ell}\).  Since \(M_{k,\ell}\) is an invariant Chern--Weil polynomial, it
is closed and
\begin{equation}\label{eq:mu-topological}
       \mu_{k,\ell}
       =\left\langle[M_{k,\ell}]\,u^{n-k},[M]\right\rangle
\end{equation}
is a characteristic number.  The explicit invariant-polynomial formula is recorded in
Appendix~\ref{app:reconstruction}.

\subsection{The algebraic bridge to \(K_n\)}
\label{subsec:algebraic-bridge}

Let \(\mathcal V_k\) be the real vector space of weight-\(k\) polynomials in the \(z_j\):
\[
 \mathcal V_k
 =\operatorname{span}_{\mathbb R}
 \left\{
 z_1^{e_1}\cdots z_k^{e_k}:
 e_1,\ldots,e_k\ge0,\quad e_1+2e_2+\cdots+ke_k=k
 \right\}.
\]
Both \(P_{n,k}\) and the \(M_{k,\ell}\) belong to \(\mathcal V_k\).  The projection moments
generate the cone
\[
 \mathcal C_k^{\mathrm{proj}}
 :=\left\{
       \sum_{\ell=1}^r b_\ell M_{k,\ell}:
       b_\ell\ge0
   \right\}
 \subset\mathcal V_k.
\]
For the given manifold, define the linear functional
\[
       L_k(Q):=\int_MQ\,u^{n-k},\qquad Q\in\mathcal V_k.
\]
Corollary~\ref{cor:mu-positive} says that
\[
       L_k(M_{k,\ell})=\mu_{k,\ell}\ge0,
\]
and hence, by linearity,
\[
       L_k(\mathcal C_k^{\mathrm{proj}})\subset\mathbb R_{\ge0}.
\]
Thus \(\mathcal C_k^{\mathrm{proj}}\) consists of polynomials whose associated characteristic
numbers are known to be nonnegative; its elements need not be coefficientwise positive.

The required assertion at weight \(k\) is
\begin{equation}\label{eq:positive-change-of-coordinates}
       P_{n,k}\in\mathcal C_k^{\mathrm{proj}},
       \qquad\text{that is,}\qquad
       P_{n,k}=\sum_{\ell=1}^r b_{n,k,\ell}M_{k,\ell},
       \quad b_{n,k,\ell}\ge0.
\end{equation}
If this identity holds, then
\[
 \int_M P_{n,k}u^{n-k}
 =\sum_{\ell=1}^r b_{n,k,\ell}
       \int_M M_{k,\ell}u^{n-k}
 =\sum_{\ell=1}^r b_{n,k,\ell}\mu_{k,\ell}\ge0.
\]

Membership in this cone is decided by an ordinary coefficient comparison.  Write
\[
 M_{k,\ell}
 =\sum_{\substack{e_1,\ldots,e_k\ge0\\
                   e_1+2e_2+\cdots+ke_k=k}}
   d_{k,\ell;e_1,\ldots,e_k}z_1^{e_1}\cdots z_k^{e_k}.
\]
Comparing this with \eqref{eq:Kn-weight-expansion} gives the finite linear system
\[
 c_{n,k;e_1,\ldots,e_k}
 =\sum_{\ell=1}^r b_{n,k,\ell}d_{k,\ell;e_1,\ldots,e_k}.
\]
The common weight makes this comparison possible, but does not guarantee a nonnegative solution. The coefficients \(d_{k,\ell;e_1,\ldots,e_k}\) themselves may have either sign.

At quadratic Weyl weight the comparison is already completely visible:
\[
       M_{2,r}=z_1^2,
       \qquad
       M_{2,1}=\frac{z_1^2+z_2}{r(r+1)}.
\]
Hence
\[
       Az_1^2+Bz_2
       =(A-B)M_{2,r}+B\,r(r+1)M_{2,1}.
\]
Thus this polynomial belongs to \(\mathcal C_2^{\mathrm{proj}}\) whenever
\(B\ge0\) and \(A-B\ge0\).
At higher weights, more monomials and moments appear, but the problem remains to express the resulting polynomial as a nonnegative linear combination. In Appendix~\ref{app:reconstruction} we give the exact rational identities.

For \(5\le n\le8\), the projection moments give the required nonnegative decompositions
directly.  In dimensions \(9\) and \(10\) we use additional averages arising from more general positive semidefinite matrices (PSD) \(A\), still governed by Proposition~\ref{prop:PSD-master}.  In dimensions \(11\) and \(12\), the whole PSD matrix-moment cone fails at Weyl weight four, and a genuinely larger family of positive scalar contractions of the curvature is required.  The following sections introduce these enlargements only when they become necessary.

\subsection{$1<n\leq 5$}
\label{subsec:low-dimensions}
The cases \(n\le5\) are included to display the curvature calculation in dimensions where it
can be followed entirely by hand.  They are not asserted here as independent new proofs of the
LeBrun--Salamon conjecture: the present symmetry reduction uses the known low-dimensional
cases.  The case \(n=5\) is the first new application.

For completeness, the virtual object already defined by its character can be written explicitly
as a finite sum of symmetric powers.  If \(n\) is even, then
\begin{equation}\label{eq:V-even}
 V_n=
 \sum_{j=0}^{(n+2)/2}
 (-1)^j\binom{n+3}{j}\Sym^{n+2-2j}H.
\end{equation}
If \(n\) is odd, then
\begin{equation}\label{eq:V-odd}
 V_n=
 \sum_{j=0}^{(n+1)/2}
 (-1)^j
 \left(\binom{n+1}{j}-\binom{n+1}{j-2}\right)
 \Sym^{n+2-2j}H,
\end{equation}
where a binomial coefficient with negative lower index is zero.  In particular,
\begin{align*}
 V_2&=\Sym^4H-5\Sym^2H+10,\\
 V_3&=\Sym^5H-4\Sym^3H+5H,\\
 V_4&=\Sym^6H-7\Sym^4H+21\Sym^2H-35,\\
 V_5&=\Sym^7H-6\Sym^5H+14\Sym^3H-14H.
\end{align*}
Since the roots of \(\Sym^mH\) are \(mh,(m-2)h,\ldots,-mh\), adding the Chern characters of
these virtual sums recovers the formula for \(\operatorname{ch}(V_n)\) in
Subsection~\ref{subsec:curvature-polynomial}.  The four expansions needed below are
\begin{align}
 \operatorname{ch}(V_2)&=16u^2+O(u^3),\label{eq:ch2}\\
 \operatorname{ch}(V_3)&=32u^2+\frac{112}{3}u^3+O(u^4),\label{eq:ch3}\\
 \operatorname{ch}(V_4)&=64u^3+64u^4+O(u^5),\label{eq:ch4}\\
 \operatorname{ch}(V_5)&=128u^3+192u^4+\frac{1936}{15}u^5+O(u^6).
 \label{eq:ch5}
\end{align}

In the root formula for \(\Ahat(TM)\) of Subsection~\ref{subsec:curvature-polynomial}, each
factor is \(f(h\pm x_a)\) with
\[
 f(s)=\frac{s/2}{\sinh(s/2)}=1-\frac{s^2}{24}+O(s^4),
\]
so the component of weight at most one is
\[
 \Ahat(TM)
 =1-\frac1{24}\sum_{a=1}^n
 \bigl((h+x_a)^2+(h-x_a)^2\bigr)+O(\text{weight }2).
\]
 Thus this is
\[
 1-\frac1{12}\left(nu+\sum_{a=1}^n x_a^2\right)
 +O(\text{weight }2).
\]
Since \(z_1/2=p_1=-(q_1+u)\), this gives
\begin{equation}\label{eq:Ahat-one}
\Ahat(TM)
 =1+\frac{z_1}{24}-\frac{n-1}{12}u
 +O(\text{weight }2).
\end{equation}

Only \(n=5\) requires the weight-two term.  We compute it directly from the
product.  Since
\[
 f(s)=1-\frac{s^2}{24}+\frac{7s^4}{5760}+O(s^6),
\]
direct multiplication gives
\[
 \begin{aligned}
 \Ahat(TM)={}&1-\frac1{24}\sum_{a=1}^5
 \bigl((h+x_a)^2+(h-x_a)^2\bigr)\\
 &+\frac1{1152}\left[
 \sum_{a=1}^5\bigl((h+x_a)^2+(h-x_a)^2\bigr)
 \right]^2\\
 &+\frac1{2880}\sum_{a=1}^5
 \bigl((h+x_a)^4+(h-x_a)^4\bigr)
 +O(\text{weight }3).
 \end{aligned}
\]
In quaternionic dimension five, using the Chern--Weil identities recalled above,
\[
 \sum_{a=1}^5\bigl((h+x_a)^2+(h-x_a)^2\bigr)=8u-z_1,
 \]
and
\[
 \sum_{a=1}^5\bigl((h+x_a)^4+(h-x_a)^4\bigr)
 =-4u^2-6z_1u+z_2.
\]
Substitution gives
\begin{equation}\label{eq:Ahat-two}
 \begin{aligned}
 \Ahat(TM)={}&1+\left(\frac{z_1}{24}-\frac{u}{3}\right)\\
 &+\left(
 \frac{z_1^2}{1152}+\frac{z_2}{2880}
 -\frac{23z_1u}{1440}+\frac{13u^2}{240}
 \right)+O(\text{weight }3)
 \end{aligned}
\end{equation}

In each case below we compute \(K_n\) explicitly and rewrite it, by the coefficient comparison
of Subsection~\ref{subsec:algebraic-bridge}, as a nonnegative linear combination of
\(u^n\) and the forms \(M_{k,\ell}u^{n-k}\), where \(r=2n+2\).  Integrating over \(M\) expresses
\(d-\delta_n\) through \(\mu_0>0\) and the projection moments \(\mu_{k,\ell}\ge0\) of
Corollary~\ref{cor:mu-positive}.  Only the moments \(M_{k,r}=z_1^k\) and \(M_{2,1}\) occur.

\subsubsection*{Quaternionic dimension two}

Here the top degree has weight two, while \eqref{eq:ch2} already begins in weight two.
Consequently only the constant term of \(\Ahat(TM)\) contributes:
\[
 \boxed{K_2=16u^2.}
\]
Thus, with \(\mu_0=\int_Mu^2>0\),
\[
 \int_MK_2=16\int_Mu^2=16\mu_0,
 \qquad\text{hence}\qquad d-5=16\mu_0>0.
\]
No matrix \(A\) is used in this case.

\subsubsection*{Quaternionic dimension three}

Now \eqref{eq:ch3} begins in weight two, so only weights zero and one of
\(\Ahat(TM)\) can contribute to weight three.  Substituting \(n=3\) in
\eqref{eq:Ahat-one},
\begin{align*}
K_3
 &=\frac{112}{3}u^3
   +32u^2\left(\frac{z_1}{24}-\frac{u}{6}\right)\\
 &=\boxed{32u^3+\frac43z_1u^2}.
\end{align*}
Here \(r=8\).  Since \(M_{1,8}=z_1\), this is the identity
\[
 K_3=32u^3+\frac43M_{1,8}u^2.
\]
Integrating it term by term gives
\[
 \int_MK_3=32\mu_0+\frac43\mu_{1,8},
 \qquad\text{hence}\qquad
 d-4=32\mu_0+\frac43\mu_{1,8}>0.
\]

\subsubsection*{Quaternionic dimension four}

Equation \eqref{eq:ch4} begins in weight three, so again only the weight-one term of
\(\Ahat(TM)\) is required.  From \eqref{eq:Ahat-one},
\begin{align*}
 K_4
 &=64u^4
   +64u^3\left(\frac{z_1}{24}-\frac{u}{4}\right)\\
 &=\boxed{48u^4+\frac83z_1u^3}.
\end{align*}
Here \(r=10\).  Since \(M_{1,10}=z_1\), we have the identity of top-degree forms
\[
 K_4=48u^4+\frac83M_{1,10}u^3.
\]
Integrating gives
\[
 \int_MK_4=48\mu_0+\frac83\mu_{1,10},
 \qquad\text{hence}\qquad
 d-7=48\mu_0+\frac83\mu_{1,10}>0.
\]

\subsubsection*{Quaternionic dimension five}

Here \eqref{eq:ch5} begins in weight three.  To obtain weight five we therefore need
weights zero, one and two of \(\Ahat(TM)\).  Substituting
\eqref{eq:Ahat-two} gives
\begin{align*}
K_5={}&\frac{1936}{15}u^5
 +192u^4\left(\frac{z_1}{24}-\frac{u}{3}\right)\\
&+128u^3\left(
 \frac{z_1^2}{1152}+\frac{z_2}{2880}
 -\frac{23z_1u}{1440}+\frac{13u^2}{240}
 \right)\\
={}&\boxed{
72u^5+\frac{268}{45}z_1u^4
+\left(\frac19z_1^2+\frac{2}{45}z_2\right)u^3}.
\end{align*}

Here \(r=12\).  The linear term is \(\frac{268}{45}M_{1,12}\).  The new term is the
weight-two polynomial in parentheses, and for it we use the two moments
\[
 M_{2,12}=z_1^2,
 \qquad
 M_{2,1}=\frac{z_1^2+z_2}{12\cdot13}.
\]
To find the required linear combination, write
\[
 \frac19z_1^2+\frac{2}{45}z_2
 =aM_{2,12}+bM_{2,1}.
\]
Substituting the two formulas above, the right-hand side becomes
\[
 \left(a+\frac{b}{156}\right)z_1^2+\frac{b}{156}z_2.
\]
Equating the coefficients of \(z_1^2\) and \(z_2\) gives
\[
 a+\frac{b}{156}=\frac19,
 \qquad
 \frac{b}{156}=\frac{2}{45}.
\]
Therefore \(a=1/15\) and \(b=104/15\), both positive, and hence
\[
 \frac19z_1^2+\frac{2}{45}z_2
 =\frac1{15}M_{2,12}+\frac{104}{15}M_{2,1}.
\]
Together with \(M_{1,12}=z_1\), this rewrites the previously computed
\(K_5\) itself as
\[
 \boxed{
 K_5=72u^5+\frac{268}{45}M_{1,12}u^4
 +\left(\frac1{15}M_{2,12}
 +\frac{104}{15}M_{2,1}\right)u^3.}
\]
This is the required linear-combination identity.  Integrating it term by term
and using the definitions of the \(\mu_{k,\ell}\) gives
\[
 \int_MK_5
 =72\mu_0+\frac{268}{45}\mu_{1,12}
 +\frac1{15}\mu_{2,12}+\frac{104}{15}\mu_{2,1}.
\]
Since all four quantities on the right are nonnegative and \(\mu_0>0\),
\[
 \boxed{
 d-6=72\mu_0+\frac{268}{45}\mu_{1,12}
 +\frac1{15}\mu_{2,12}+\frac{104}{15}\mu_{2,1}>0.}
\]
After multiplication by \(45\), this reads
\begin{equation}\label{eq:n5mu}
45(d-6)=3240\mu_0+268\mu_{1,12}+3\mu_{2,12}+312\mu_{2,1}.
\end{equation}

These calculations show the first change in the method.  Dimension two is purely scalar;
dimensions three and four use only \(A=I_r\) with \(k=1\); dimension five also uses
\(A=I_{12}\) with \(k=2\), and is the first case in which the nontrivial rank-one family
\(A_g=gP_1^0g^{-1}\) is needed to control a combination of \(z_1^2\) and \(z_2\).
\section{The first cases: projection moments suffice}
\label{sec:first-cases}

In Appendix~\ref{app:reconstruction-5-8} we reconstruct the already-defined characteristic number $K_n$ in the corrected Weyl power sums.  The Schur--Weyl identities enable us to convert those power sums into a combination of the projection-moment generators.  The following exact identities are the resulting decompositions. The positivity of the coefficients, rather than the reconstruction itself, is the point used in the proof.

The first four new dimensions require no positivity beyond the projection moments
$\mu_{k,\ell}$.  The case $n=5$ was computed explicitly in
Subsection~\ref{subsec:low-dimensions}, giving \eqref{eq:n5mu}; the cases $n=6,7,8$ follow the
same pattern.  Note that cubic moments first appear at $n=7$.

For $n=6$ ($r=14$), exact coefficient comparison gives
\begin{equation}\label{eq:n6mu}
45(d-9)=4320\mu_0+416\mu_{1,14}+6\mu_{2,14}+840\mu_{2,1}.
\end{equation}
For $n=7$ ($r=16$), after clearing a factor two,
\begin{align}\label{eq:n7mu}
1890(d-8)={}&241920\mu_0+28080\mu_{1,16}+668\mu_{2,16}
 +100096\mu_{2,1}\notag\\
&+3\mu_{3,16}+6528\mu_{3,1}+4080\mu_{3,2}.
\end{align}
For $n=8$ ($r=18$),
\begin{align}\label{eq:n8mu}
4725(d-11)={}&756000\mu_0+96720\mu_{1,18}+2710\mu_{2,18}
 +485640\mu_{2,1}\notag\\
&+15\mu_{3,18}+43320\mu_{3,1}+29070\mu_{3,2}.
\end{align}
By Corollary~\ref{cor:mu-positive}, the right-hand sides of these three identities are strictly positive and therefore $d>9$, $d>8$ and $d>11$ respectively.

Combining these inequalities and the inequality $d>6$ from \eqref{eq:n5mu} with
Theorem~\ref{M-min-symm} excludes a minimal counterexample in each of the dimensions
$5\le n\le8$.  At $n=9$ a Weyl-weight-four component appears for the first time, and the projection-moment decomposition used above no longer produces an identity where all the coefficients are positive. 

\section{Dimensions nine and ten: a positive integral for the highest Weyl term}
\label{sec:nine-ten}

For $n\leq 8$, every Weyl-weight component of $K_n$ can be written as a nonnegative linear combination of the projection moments $M_{k,\ell}$. In dimensions $9$ and $10$, the components of Weyl weights $1,2,3$ are handled in exactly the same way. The only new issue is the highest Weyl-weight term, of weight four.

We first explain why the particular quartic polynomial which occurs is natural. Recall that
\[
\widehat A(TM)
=
\prod_{a=1}^n
f(h+x_a)f(h-x_a),
\qquad
f(s)=\frac{s/2}{\sinh(s/2)}.
\]
Write
\[
\log f(s)=\sum_{j\geq1}\ell_j s^{2j},
\qquad
a_j:=(-1)^j\ell_j
=\frac{\zeta(2j)}{j(4\pi^2)^j}>0.
\]
After replacing the formal $E$-root power sums by
\[
p_j=\frac{z_j}{2}=(-1)^j(q_j+u^j),
\]
the part of $\log\widehat A(TM)$ of Weyl weight $j$ containing no factor of $u$ is $a_jz_j$. Thus the purely Weyl part of the $\widehat A$-series is encoded by
\begin{equation}\label{eq:Fseries}
F(Y;t)
:=
\exp\!\left(\sum_{j\geq1}a_jz_jt^j\right)
=
\sum_{k\geq0}F_k(Y)t^k,
\end{equation}
where $t$ records Weyl weight.

For $n=9$, $\operatorname{ch}(V_9)$ begins in $u^5$, and for $n=10$ it begins in $u^6$. Hence the largest possible Weyl weight in either case is four. Appendix~\ref{app:reconstruction-9-10} computes the characteristic numbers explicitly and shows that
\[
[K_9]_4=2^{11}F_4,
\qquad
[K_{10}]_4=2^{12}F_4.
\]
Thus it remains only to prove
\[
\int_M F_4(Y)u^{n-4}\geq0.
\]

This follows from Proposition~\ref{prop:PSD-master} by an elementary positive integral representation. Euler's product for $\sinh$ gives
\[
F(Y;t)
=
\prod_{m\geq1}\det(I-q_mtY)^{-1},
\qquad
q_m=\frac1{4\pi^2m^2}.
\]
To understand one factor, start from
\begin{equation}\label{eq:positive-integral-one-variable}
\frac1{1-s}
=
\int_0^\infty e^{-u}e^{su}\,du
=
\frac1\pi\int_{\mathbf C}e^{-|v|^2}e^{s|v|^2}\,dv.
\end{equation}
Let for the moment $Y_0$ be an ordinary Hermitian $r\times r$ matrix with eigenvalues
$y_1,\ldots,y_r$. Applying \eqref{eq:positive-integral-one-variable} with
$s=qty_i$ and multiplying over $i$ gives
\begin{equation}\label{eq:inverse-det-positive-integral}
\det(I-qtY_0)^{-1}
=
\frac1{\pi^r}
\int_{\mathbf C^r}
e^{-|v|^2}
e^{\,t\operatorname{tr}(qvv^*Y_0)}\,dv.
\end{equation}
Indeed, in a basis diagonalizing $Y_0$,
\[
v^*Y_0v=\sum_i y_i|v_i|^2
=\operatorname{tr}(vv^*Y_0).
\]
Both sides of \eqref{eq:inverse-det-positive-integral} are conjugation-invariant power series whose coefficients are universal polynomials in the traces $\operatorname{tr}(Y_0^j)$, so the same coefficient identity applies to the Chern--Weil variable $Y$.

For the finite product over $m\leq N$, introduce vectors
$v_1,\ldots,v_N\in\mathbf C^r$ and set
\[
A_N:=\sum_{m=1}^Nq_mv_mv_m^*.
\]
Since each $v_mv_m^*$ is positive semidefinite,
\[
A_N\succeq0.
\]
Multiplying \eqref{eq:inverse-det-positive-integral} for $m=1,\ldots,N$ gives
\[
\prod_{m=1}^N\det(I-q_mtY)^{-1}
=
\frac1{\pi^{rN}}
\int
e^{-\sum_m|v_m|^2}
e^{\,t\operatorname{tr}(A_NY)}
\,dv_1\cdots dv_N.
\]
Expanding the final exponential, the coefficient of $t^k$ is
\[
\frac1{k!\pi^{rN}}
\int
e^{-\sum_m|v_m|^2}
\bigl(\operatorname{tr}(A_NY)\bigr)^k
\,dv_1\cdots dv_N.
\]
Letting $N\to\infty$ therefore gives
\begin{equation}\label{eq:Fk-positive-integral}
F_k(Y)
=
\frac1{k!}
\lim_{N\to\infty}
\frac1{\pi^{rN}}
\int
e^{-\sum_m|v_m|^2}
\bigl(\operatorname{tr}(A_NY)\bigr)^k
\,dv_1\cdots dv_N.
\end{equation}

For every choice of the vectors $v_m$, the matrix $A_N$ is positive semidefinite. Proposition~\ref{prop:PSD-master} therefore gives
\[
\bigl(\operatorname{tr}(A_NY)\bigr)^k
\wedge\Omega^{n-k}\geq0.
\]
The factor $e^{-\sum_m|v_m|^2}$ in \eqref{eq:Fk-positive-integral} is positive, so integration preserves the inequality. Hence
\[
F_k(Y)\wedge\Omega^{n-k}\geq0.
\]
Since $u$ is a positive multiple of $\Omega$,
\[
\Phi_k^{(n)}
:=
\int_MF_k(Y)u^{n-k}\geq0.
\]

Thus the new weight-four term in dimensions nine and ten is positive for the same geometric reason as the earlier matrix moments: it is a positive integral of expressions already covered by Proposition~\ref{prop:PSD-master}. The integral representation is only a convenient way of recognizing this particular polynomial.

For $n=9$, with $r=20$, the exact coefficient calculation of Appendix~\ref{app:reconstruction-9-10} gives
\begin{equation}\label{eq:n9}
\begin{aligned}
d-10={}&200\mu_0
+\frac{180512}{315}\mu_{1,1}
+\frac{8752}{45}\mu_{2,1}
+\frac{14704}{14175}\mu_{2,20}\\
&+\frac{2992}{405}\mu_{3,1}
+\frac{2090}{81}\mu_{3,2}
+\frac{239}{28350}\mu_{3,20}
+2^{11}\Phi_4^{(9)}.
\end{aligned}
\end{equation}
Hence $d>10$.

For $n=10$, with $r=22$,
\begin{equation}\label{eq:n10}
\begin{aligned}
d-13={}&240\mu_0
+\frac{182336}{225}\mu_{1,1}
+\frac{1495736}{4725}\mu_{2,1}
+\frac{21278}{14175}\mu_{2,22}\\
&+\frac{4048}{4725}\mu_{3,1}
+\frac{23276}{405}\mu_{3,2}
+\frac{194}{14175}\mu_{3,22}
+2^{12}\Phi_4^{(10)}.
\end{aligned}
\end{equation}
Hence $d>13$.

All terms on the right-hand sides are nonnegative, while the scalar term is strictly positive. Theorem~\ref{M-min-symm} therefore excludes a minimal counterexample in quaternionic dimensions nine and ten.

This PSD-moment mechanism stops in dimensions eleven and twelve: the exact separating
functionals of Appendix~\ref{app:obstruction-psd} show that the new weight-four targets lie outside
the entire PSD matrix-moment cone.  This forces the scalar-orbital enlargement used next.

\section{Dimensions eleven and twelve: scalar orbital moments}
\label{sec:eleven-twelve}

The next step is genuinely different.  In dimensions eleven and twelve the components of
Weyl weights at most three are still handled by projection moments, while the highest
Weyl-weight term is again
\[
        [K_{11}]_5=2^{13}F_5,
        \qquad
        [K_{12}]_5=2^{14}F_5,
\]
and is therefore nonnegative by Section~\ref{sec:nine-ten}.  The only new difficulty is the component of Weyl weight four.

One might hope to handle this term by choosing more projection ranks, or more general
positive-semidefinite matrices $A$ in Proposition~\ref{prop:PSD-master}.  This is impossible: Appendix~\ref{app:obstruction-psd} gives an exact separating functional showing that the weight-four
components of both $K_{11}$ and $K_{12}$ lie outside the entire PSD matrix-moment cone.
We therefore return to the more basic positivity used in the proof of Proposition~\ref{prop:PSD-master},
before $X$ was squared.

\subsection{Signed scalar orbital positivity}

Recall from the proof of Proposition~\ref{prop:PSD-master} that for every real two-form
$\theta\in\mathfrak{sp}(n)$,
\[
        (-1)^k\theta^{2k}\wedge\Omega^{n-k}\geq0.
\]
We apply this directly to scalar contractions of $X$.  Let $B$ be a fixed Hermitian matrix
on the nonzero quaternionic block of $X$, and for $g\in\Sp(n)$ put
\[
        \theta_g:=\tr(BgXg^{-1}).
\]
If $J$ denotes the quaternionic structure and $\tau(B)=JBJ^{-1}$, then
$\tau(gXg^{-1})=-gXg^{-1}$, so
\[
 \tr(BgXg^{-1})
 =
 \tr\left(\frac{B-\tau(B)}2\,gXg^{-1}\right).
\]
Thus we may take $B\in i\mathfrak{sp}(n)$, with eigenvalues
\[
        \pm\sqrt{a_1},\ldots,\pm\sqrt{a_n},
        \qquad a_i\geq0.
\]

Averaging the even powers of $\theta_g$ over $\Sp(n)$ gives, up to a positive normalization,
a pointwise-positive invariant polynomial depending only on
$a=(a_1,\ldots,a_n)$.  We normalize it by the type-$C$ Harish--Chandra formula
\begin{equation}\label{eq:orbital-def}
 O_k^{(n)}(a;p)=
 4^k\sum_{\lambda\vdash k}s_\lambda(a)s_\lambda(p)
 \prod_{i=1}^{\ell(\lambda)}
 \frac{(2(n-i)+1)!}{(2(\lambda_i+n-i)+1)!},
\end{equation}
where $s_\lambda(p)$ denotes the Schur function expressed in the power-sum variables
$p_j=z_j/2$; see \cite{HarishChandra1957}.  Hence
\begin{equation}\label{eq:orbital-def positivity}
       O_k^{(n)}(a;p)\,u^{n-k}\ge0,
       \qquad
       \omega_k^{(n)}(a):=\int_MO_k^{(n)}(a;p)u^{n-k}\ge0.
\end{equation}

Appendix~\ref{app:orbital-inclusion} shows that, after invariant averaging, every PSD moment $(\tr AY)^k$ is a positive mixture of scalar orbital moments.  Thus the projection moments and the terms $F_k$ of Section~\ref{sec:nine-ten} already lie in this larger cone; they could have been treated this way in dimensions nine and ten, but the smaller PSD cone was sufficient there.

\subsection{The quartic replacements}

Let $Q_{11,4}$ and $Q_{12,4}$ denote the Weyl-weight-four components of $K_{11}$ and
$K_{12}$.  Appendix~\ref{app:reconstruction-11-12} gives the exact five-dimensional linear-algebra calculation leading
to the following positive decompositions:
\begin{align}\label{eq:Q11orb}
Q_{11,4}={}&
 \frac{1008851484437}{2142693}O_4^{(11)}(1,1)
 +\frac{151204920425}{714231}O_4^{(11)}(2,1,1)\notag\\
&+\frac{10770035384}{714231}O_4^{(11)}(5,1,1,1,1)
 +\frac{51654964}{6428079}O_4^{(11)}(10,1^8)\notag\\
&+\frac{252882056}{1530495}O_4^{(11)}(10,1^{10}),
\end{align}
and
\begin{align}\label{eq:Q12orb}
Q_{12,4}={}&
 \frac{2298362246}{9933}O_4^{(12)}(1,1)
 +\frac{194400388184}{148995}O_4^{(12)}(1,1,1)\notag\\
&+\frac{7327263802}{9933}O_4^{(12)}(2,1,1)
 +\frac{178334456}{148995}O_4^{(12)}(10,1^8)\notag\\
&+\frac{4485216}{3311}O_4^{(12)}(10,1^9).
\end{align}
Here omitted spectral coordinates are zero and $1^j$ denotes $j$ entries equal to one.
All coefficients are positive, and therefore
\[
        \int_MQ_{11,4}u^7\ge0,
        \qquad
        \int_MQ_{12,4}u^8\ge0.
\]

The remaining components may still be written in the convenient projection-moment
coordinates.  The exact reconstruction gives
\begin{align}\label{eq:n11}
 d-12={}&288\mu_0+\frac{2482652}{51975}\mu_{1,24}
 +\frac{3094720}{6237}\mu_{2,1}+\frac{214814}{93555}\mu_{2,24}\notag\\
&+\frac{164360}{2673}\mu_{3,2}+\frac{10048}{891}\mu_{3,3}
 +\frac{922051}{93555}\mu_{3,4}
 +\int_MQ_{11,4}u^7+2^{13}\Phi_5^{(11)},
\end{align}
and
\begin{align}\label{eq:n12}
 d-15={}&336\mu_0+\frac{3050776}{51975}\mu_{1,26}
 +\frac{12506936}{17325}\mu_{2,1}+\frac{1445966}{467775}\mu_{2,26}\notag\\
&+\frac{472784}{155925}\mu_{3,1}+\frac{152711}{18711}\mu_{3,2}
 +\frac{1771432}{18711}\mu_{3,3}
 +\int_MQ_{12,4}u^8+2^{14}\Phi_5^{(12)}.
\end{align}
Every term on the right is nonnegative and the scalar term is strictly positive.  Hence
\[
        d>12\quad(n=11),
        \qquad
        d>15\quad(n=12).
\]
Together with Theorem~\ref{M-min-symm}, this proves Theorem~\ref{thm:main} up to $n=12$.

\section{Extension through fourteen: Hodge squares}
\label{sec:hodge-extension}

The scalar orbital cone is no longer sufficient in quaternionic dimension thirteen. The weight-four component of $K_{13}$ lies outside it, as shown by the exact separating functional in Appendix~\ref{app:obstruction-orbital}.  We use instead a standard consequence of the Hodge--Riemann relations on the twistor space.

\subsection{Hodge--Riemann positivity}

Let $\pi:Z\to M$ be the twistor fibration and let $L$ be its contact line bundle.  Set
\[
        h=\frac12c_1(L)\in H^2(Z,\mathbb Q).
\]
We use the same notation $h$ as for the formal Chern root of $H$.  The standard twistor-space identities are
\begin{equation}\label{eq:twistor-h}
        h^2=\pi^*u,\qquad \pi_*h=1.
\end{equation}
The first identifies the square of $h$ with the pullback of $u=-c_2(H)$, while the second follows from
\[
        L|_{\pi^{-1}(x)}\simeq\mathcal O_{\mathbb{CP}^1}(2),
\]
so that $\int_{\pi^{-1}(x)}h=1$.  Up to a positive constant, $h$ is the K\"ahler class of $Z$.

The Hodge--Riemann bilinear relations on $Z$ imply the definiteness of the quaternionic intersection form: for even $r$,
\begin{equation}\label{eq:quaternionic-intersection}
        Q_r(x,y)
        =
        (-1)^{r/2}
        \int_Mxy\,u^{\,n-r/2},
        \qquad
        x,y\in H^r(M,\mathbb R),
\end{equation}
is positive definite.  This is the classical quaternionic intersection-form result of Fujiki \cite[Theorem 3.27]{Fujiki1987}; see also \cite[Theorem~1.2]{Amann2012} for a convenient formulation in the positive quaternionic Kähler setting. Indeed, the corresponding intersection form is obtained on $Z$ by pulling classes back from $M$ and using \eqref{eq:twistor-h}.  In particular, for $a\in H^{4k}(M,\mathbb R)$,
\[
\begin{aligned}
        \int_Z(\pi^*a)^2h^{2n+1-4k}
        &=
        \int_Z\pi^*\!\left(a^2u^{n-2k}\right)h  \\
        &=
        \int_Ma^2u^{n-2k}.
\end{aligned}
\]
Since $r=4k$ has positive Hodge--Riemann sign, we obtain
\begin{equation}\label{eq:hodge-square}
        \int_Ma^2u^{\,n-2k}\ge0,
\end{equation}
with strict inequality unless $a=0$ in cohomology.

\subsection{Quaternionic dimension thirteen}\label{subsec:hodge-thirteen}

Write the index density, as above, in Weyl weights:
\[
        K_{13}
        =
        392u^{13}
        +\sum_{k=1}^{6}P_{13,k}u^{13-k},
\]
where $P_{13,k}$ has Weyl weight $k$.  For every $1\le k\le6$ with $k\ne4$, the previously computed exact identity gives a positive scalar-orbital decomposition of $P_{13,k}$.  Appendix~\ref{app:reconstruction-13-14} reconstructs these components exactly; the orbital coefficients of the resulting identity are recorded at \url{\CertificateRepository}.  At weight four this is impossible: Appendix~\ref{app:obstruction-orbital} gives a linear functional $L_{13}$ which is nonnegative on every scalar-orbital generator but satisfies
\[
        L_{13}(P_{13,4})<0.
\]
Thus the problem is specifically to supply an additional positive term whose weight-four component points outside the scalar-orbital cone.

The Hodge-square inequality provides such additional terms.  If $a\in H^{12}(M,\mathbb R)$, then
\[
        \int_Ma^2u^7\ge0.
\]
The coefficient search described in Appendix~\ref{app:reconstruction-13-14} shows that one such square is sufficient.  The square which occurs in the resulting identity has the form
\[
        a=\alpha u^2p_1+\beta p_3+\gamma p_1p_2+\delta p_1^3.
\]
The coefficients $\alpha,\beta,\gamma,\delta$ are not chosen in advance: they are among the unknowns in the coefficient calculation of Appendix~\ref{app:reconstruction-13-14}.

It is useful first to see exactly how such a square changes the Weyl-weight decomposition.  Put
\[
        b=\beta p_3+\gamma p_1p_2+\delta p_1^3.
\]
The term $u^2p_1$ has Weyl weight one and $b$ has Weyl weight three.  Hence
\[
\begin{aligned}
        a^2u^7
        &=(\alpha u^2p_1+b)^2u^7\\
        &=\alpha^2p_1^2u^{11}
        +2\alpha p_1b\,u^9
        +b^2u^7.
\end{aligned}
\]
Thus a positive multiple $\lambda a^2u^7$, with $\lambda>0$, changes exactly the Weyl-weight two, four and six components of $K_{13}$.  More explicitly these contributions are
\[
        \lambda\alpha^2p_1^2
\]
at weight two,
\[
        2\lambda\alpha
        \bigl(
        \delta p_1^4+\gamma p_1^2p_2+\beta p_1p_3
        \bigr)
\]
at weight four, and
\[
        \lambda
        \bigl(
        \beta p_3+\gamma p_1p_2+\delta p_1^3
        \bigr)^2
\]
at weight six.

The reason this can overcome the weight-four obstruction is already visible before doing the full calculation.  For example,
\[
        (u^2p_1+p_3)^2u^7
        =
        p_1^2u^{11}+2p_1p_3u^9+p_3^2u^7,
\]
and the weight-four contribution is $2p_1p_3$.  The separating functional of Appendix~\ref{app:obstruction-orbital} satisfies
\[
        L_{13}(2p_1p_3)=-39938<0.
\]
Thus Hodge squares contain directions on the opposite side of the separating hyperplane from the scalar-orbital cone, and can therefore repair the obstruction.

It therefore suffices to find $\lambda>0$ and $\alpha,\beta,\gamma,\delta\in\mathbb Q$ such that, after subtracting the three contributions above from $P_{13,2}$, $P_{13,4}$ and $P_{13,6}$, each remainder still admits a positive scalar-orbital decomposition; the other Weyl-weight components are unchanged.  This is a finite exact coefficient problem, solved in Appendix~\ref{app:reconstruction-13-14} with $\lambda=18$ and
\begin{equation}\label{eq:a13}
        a=
        10^{-9}\bigl(
        999999993\,u^2p_1
        +29552\,p_3
        +83790\,p_1p_2
        +74653\,p_1^3
        \bigr).
\end{equation}
The normalization of $a$ is irrelevant, since a rescaling of $a$ may be absorbed into $\lambda$; the coefficients in \eqref{eq:a13} are simply one convenient rational solution.

Since $u,p_1\in H^4(M,\mathbb R)$, $p_2\in H^8(M,\mathbb R)$ and $p_3\in H^{12}(M,\mathbb R)$, we have $a\in H^{12}(M,\mathbb R)$, and hence
\[
        \int_Ma^2u^7\ge0.
\]
Substituting the exact positive decompositions of all Weyl-weight components gives the certificate \eqref{eq:K13-restored},
\[
        K_{13}
        =
        392u^{13}
        +18\,a^2u^7
        +\text{positive scalar-orbital terms}.
\]
Every term on the right has nonnegative integral, while
\[
        \int_Mu^{13}>0.
\]
Therefore
\[
        d-14=\int_MK_{13}>0,
\]
and hence
\[
        d>14.
\]

\subsection{Quaternionic dimension fourteen}

The same idea applies in quaternionic dimension fourteen.  The decomposition uses again one Hodge square, written in the basis of the seven degree-$12$ characteristic monomials
\[
        u^3,\qquad
        u^2p_1,\qquad
        up_2,\qquad
        up_1^2,\qquad
        p_3,\qquad
        p_1p_2,\qquad
        p_1^3.
\]
Put
\[
        v=
        \begin{pmatrix}
        u^3&
        u^2p_1&
        up_2&
        up_1^2&
        p_3&
        p_1p_2&
        p_1^3
        \end{pmatrix}^{T}.
\]
Every linear combination of the entries of $v$ lies in $H^{12}(M,\mathbb R)$ and hence has nonnegative Hodge square after multiplication by $u^8$.

Consequently, if $G$ is any positive-semidefinite real symmetric $7\times7$ matrix, then
\[
        \int_M v^TGv\,u^8\ge0.
\]
Indeed, writing $G=C^TC$ gives
\[
        v^TGv=\sum_i(C_iv)^2,
\]
so this is simply a positive sum of the Hodge-square inequalities.

This quadratic form interacts with the Weyl-weight decomposition in exactly the same way as the single square above.  The seven entries of $v$ have Weyl weights
\[
        0,\qquad1,\qquad2,\qquad2,\qquad3,\qquad3,\qquad3,
\]
so a product of entries of weights $r$ and $s$, multiplied by $u^8$, has the same form as the component $P_{14,r+s}u^{14-r-s}$ of $K_{14}$.  Since $K_{14}$ has no Weyl-weight components above six, it suffices to find a positive-semidefinite rational $G$ such that, after subtracting $v^TGv\,u^8$ from $K_{14}$, every remaining Weyl-weight component has a positive scalar-orbital decomposition.  This is again a finite exact coefficient problem, solved in Appendix~\ref{app:reconstruction-13-14}.

The matrix produced by this comparison is the rank-one Gram matrix of Appendix~\ref{app:reconstruction-13-14}.  With $w$ and $G=20ww^T$ as in \eqref{eq:w14} and \eqref{eq:G14}, one has $v^TGv=20(w^Tv)^2$.  The scalar reserve that remains is the full constant term of the reduced density,
\[
        c=448>0.
\]
Together with the positive scalar-orbital terms, this is the certificate \eqref{eq:K14-restored},
\[
        K_{14}
        =
        c\,u^{14}
        +v^TGv\,u^8
        +\text{positive scalar-orbital terms}.
\]
It follows that
\[
        d-17=\int_MK_{14}>0,
\]
and therefore
\[
        d>17.
\]

Appendix~\ref{app:reconstruction-13-14} reconstructs $K_{13}$ and $K_{14}$ exactly.  The one-square expressions are \eqref{eq:K13-certificate} and \eqref{eq:K14-certificate}; their orbital coefficients are recorded at \url{\CertificateRepository}.

This establishes the present argument through quaternionic dimension fourteen, completing the proof of Theorem~\ref{thm:main}.
\appendix

\section{Exact cone obstructions at weight four}
\label{app:obstructions}

We record three short separation arguments explaining why the positivity mechanism must enlarge
at dimensions nine, eleven and thirteen.
\subsection{The projection-moment cone fails at nine and ten}
\label{app:obstruction-projection}

At Weyl weight four we use the ordered basis
\[
        b=(p_1^4,p_1^2p_2,p_2^2,p_1p_3,p_4).
\]
Recall that, in dimensions \(n=9,10\), the quartic component of the index density is a positive multiple of
\[
 F_4=
 \frac{
 175p_1^4+210p_1^2p_2+21p_2^2+80p_1p_3+18p_4
 }{87091200}.
\]

We show that \(F_4\) does not belong to the cone generated by the projection moments \(M_{4,\ell}\).

For \(n=9\), so \(r=2n+2=20\), consider the linear functional
\[
        L^{\mathrm{proj}}_9=(0,-1,84,-30,0)
\]
on the above basis.  Expanding the projection moment \(M_{4,\ell}\) using the Schur--Weyl formula gives
\[
 L^{\mathrm{proj}}_9(M_{4,\ell})
 =
 \frac{\ell(\ell-1)(20-\ell)(19-\ell)}{1017450}.
\]
Hence
\[
        L^{\mathrm{proj}}_9(M_{4,\ell})\ge0
        \qquad(1\le\ell\le20).
\]
On the other hand,
\[
        L^{\mathrm{proj}}_9(F_4)
        =-\frac{47}{4838400}<0.
\]
Thus \(F_4\) is not contained in the closed cone generated by the projection moments in dimension nine.

Similarly, for \(n=10\), so \(r=22\), define
\[
        L^{\mathrm{proj}}_{10}=(0,0,8,-3,0).
\]
Again the Schur--Weyl formula gives
\[
 L^{\mathrm{proj}}_{10}(M_{4,\ell})
 =
 \frac{\ell(\ell-1)(22-\ell)(\ell+1)}{1168860}
 \ge0
 \qquad(1\le\ell\le22),
\]
whereas
\[
        L^{\mathrm{proj}}_{10}(F_4)
        =-\frac1{1209600}<0.
\]
So \(F_4\) also lies outside the projection-moment cone in dimension ten.

Since
\[
        [K_9]_4=2^{11}F_4,
        \qquad
        [K_{10}]_4=2^{12}F_4,
\]
the new weight-four component appearing in dimensions nine and ten cannot be handled using projection moments alone.  This is why the positivity cone has to be enlarged by considering general positive-semidefinite moments.
\subsection{The PSD matrix cone fails at eleven and twelve}
\label{app:obstruction-psd}
At Weyl weight four use the ordered basis
\[
       b=(p_1^4,p_1^2p_2,p_2^2,p_1p_3,p_4).
\]
Let $M_a$ denote the invariant quartic moment obtained from a quaternionic Hermitian
PSD matrix with spectrum $a$, and put $A_j=\sum_i a_i^j$.  Define
\[
 P_{11}=(2695,-4585,25339,-1295,4361),
\]
\[
 P_{12}=(15138,-26651,150977,-6873,23971).
\]
Direct expansion of the Schur formulas gives the exact identities
\[
 P_{11}(M_a)=\frac{(9A_2-4A_1^2)^2}{55}\ge0,
 \qquad
 P_{12}(M_a)=\frac{(7A_2-3A_1^2)^2}{8}\ge0.
\]
On the quartic targets occurring in $K_{11}$ and $K_{12}$, however,
\[
       P_{11}(Q_{11,4})=-\frac{67336}{93555}<0,
       \qquad
       P_{12}(Q_{12,4})=-\frac{245414}{17325}<0.
\]
Thus neither quartic target belongs to the closed PSD matrix-moment cone.  The same argument
excludes arbitrary complex PSD test matrices: compress to the nonzero block of $X$ and average
with its quaternionic conjugate; the trace pairing with $Y$ is unchanged up to a positive scalar.

\subsection[PSD moments are mixtures of scalar orbital moments]{PSD moments are mixtures of scalar orbital moments}
\label{app:orbital-inclusion}
We show that every PSD moment, after invariant averaging, is a positive mixture of the scalar
orbital moments \eqref{eq:orbital-def}.  Consequently every linear functional which is nonnegative
on all $O_k^{(n)}(a;p)$, $a\in\RR^n_{\ge0}$, is also nonnegative on the PSD matrix-moment cone.

Let $A\succeq0$ be Hermitian on $E\oplus H$.  By Lemma~\ref{lem:weyl-connection}, $X$ and $Y$
have vanishing $H$-block, so $\tr(AY)=\tr(A_EY)$, where the compression $A_E$ of $A$ to $E$ is
again positive semidefinite.  We may therefore assume that $A$ acts on $E$.

\emph{A sum of squares independent of the curvature.}  Put $C=A^{1/2}$.  Since
$X\in\mathfrak{sp}(n)\otimes_{\RR}\Herm(E)$, we have $(CX)^*=XC$, where $^*$ acts on the matrix
coefficients and the two-form coefficients are real.  As two-forms commute,
\[
 \tr(AX^2)=\tr\bigl((CX)(XC)\bigr)
 =\sum_{i,j}(CX)_{ij}\,\overline{(CX)_{ij}}
 =\sum_{i,j}\Bigl(\bigl(\operatorname{Re}(CX)_{ij}\bigr)^2+\bigl(\operatorname{Im}(CX)_{ij}\bigr)^2\Bigr).
\]
For any matrix $N$ one has $\operatorname{Re}\tr(NX)=\tr\bigl(\tfrac12(N+N^*)X\bigr)$ and
$\operatorname{Im}\tr(NX)=\tr\bigl(\tfrac1{2i}(N-N^*)X\bigr)$.  Applying this with
$N=\epsilon_{ji}C$, where $\epsilon_{ji}$ is the matrix unit, so that $(CX)_{ij}=\tr(NX)$, gives
finitely many Hermitian matrices $B_1,\ldots,B_m$,
depending only on $A$, such that
\[
       \tr(AY)=-\tr(AX^2)=-\sum_{s=1}^m\tr(B_sX)^2.
\]
The same matrices serve for every conjugate $gXg^{-1}$, $g\in\Sp(n)$, since $gXg^{-1}$ is again
Hermitian-valued:
\[
       \tr\bigl(A\,gYg^{-1}\bigr)=-\sum_{s=1}^m\tr\bigl(B_s\,gXg^{-1}\bigr)^2.
\]

\emph{Gaussian linearization.}  Let $\xi=(\xi_1,\ldots,\xi_m)$ be independent real standard
Gaussians and put $B_\xi=\sum_s\xi_sB_s$.  For commuting variables $w_s$,
$\EE_\xi\bigl(\sum_s\xi_sw_s\bigr)^{2k}=(2k-1)!!\,\bigl(\sum_sw_s^2\bigr)^k$.  Applying this
polynomial identity to the commuting two-forms $w_s=\tr(B_s\,gXg^{-1})$ gives
\[
       \bigl(\tr(A\,gYg^{-1})\bigr)^k
       =\frac{1}{(2k-1)!!}\,\EE_\xi\Bigl[(-1)^k\tr\bigl(B_\xi\,gXg^{-1}\bigr)^{2k}\Bigr].
\]

\emph{Orbital averaging.}  Averaging over $g\in\Sp(n)$ and interchanging the two integrals, the
inner average is the scalar orbital average of Section~\ref{sec:eleven-twelve} for the fixed
matrix $B_\xi$.  As explained there, $B_\xi$ may be replaced by
$\tfrac12(B_\xi-\tau(B_\xi))\in i\mathfrak{sp}(n)$; let $a(B_\xi)\in\RR^n_{\ge0}$ be the squares
of its nonnegative eigenvalues.  With $c_{n,k}>0$ the positive normalization constant implicit in
\eqref{eq:orbital-def},
\begin{equation}\label{eq:PSD-in-orbital}
       \int_{\Sp(n)}\bigl(\tr(A\,gYg^{-1})\bigr)^k\,dg
       =\frac{c_{n,k}}{(2k-1)!!}\,\EE_\xi\,O_k^{(n)}\bigl(a(B_\xi);p\bigr).
\end{equation}
The right-hand side is an average of scalar orbital moments against a probability measure, so it
lies in the closed scalar-orbital cone.  In particular, if a linear functional $L$ on
$\mathcal V_k$ satisfies $L(O_k^{(n)}(a;p))\ge0$ for all $a\in\RR^n_{\ge0}$, then $L$ is
nonnegative on the left-hand side.

\emph{Projection moments and Gaussian terms.}  A $U(r)$-invariant polynomial in $Y$ is in
particular $\Sp(n)$-invariant.  Hence
\[
       M_{k,\ell}(Y)=\int_{U(r)}\int_{\Sp(n)}
       \bigl(\tr(gP_\ell^0g^{-1}\,hYh^{-1})\bigr)^k\,dh\,dg,
\]
and each inner integral has the form \eqref{eq:PSD-in-orbital}.  Likewise, \eqref{eq:Fk-positive-integral}
expresses $F_k$ as a limit of averages of $\bigl(\tr(A_NY)\bigr)^k$ over a $U(r)$-invariant
measure on $A_N\succeq0$, so the same argument applies, and the closed cone contains the limit.
Thus the projection moments, the Gaussian terms $F_k$, and the quaternionic PSD moments $M_a$ of
Appendix~\ref{app:obstruction-psd} all belong to the closed scalar-orbital cone.

\subsection{The scalar-orbital cone fails at thirteen and fourteen}
\label{app:obstruction-orbital}
Define
\[
 L_{13}=(32231,-53169,301931,-19969,81681),
\]
\[
 L_{14}=(33248,-94256,878742,-28008,227741).
\]
For $a\in\RR_{\ge0}^n$, with $A_j=\sum_i a_i^j$, exact expansion of
\eqref{eq:orbital-def} gives
\[
 L_{13}\bigl(O_4^{(13)}(a;p)\bigr)
 =\frac{(3A_2-A_1^2)^2}{221130}\ge0,
\]
\[
 L_{14}\bigl(O_4^{(14)}(a;p)\bigr)
 =\frac{(4A_2-A_1^2)^2}{246645}\ge0.
\]
Direct coefficient comparison gives
\[
       L_{13}(P_{13,4})<0,
       \qquad
       L_{14}(P_{14,4})<0.
\]
Hence the quartic targets lie outside the entire closed scalar-orbital cone.  Since the PSD
matrix cone is contained in the scalar-orbital cone by Appendix~\ref{app:orbital-inclusion},
adding any $M_4(A)$ cannot repair the obstruction.  This is the precise point at which a genuinely
different positivity mechanism, such as the Hodge squares of Section~\ref{sec:hodge-extension},
becomes useful.

\section{Reconstructing the index densities}
\label{app:reconstruction}

This appendix gives the exact arithmetic behind the positivity calculations in dimensions \(5\) through \(14\) provided in Sections~\ref{sec:positivity}--\ref{sec:hodge-extension}.  We first record the two universal formulas used in every dimension.  Write
$z_j=2p_j$.  The projection moments are given by the Schur--Weyl identity
\begin{equation}\label{eq:projection-schur}
 M_{k,\ell}=\sum_{\lambda\vdash k}
 \frac{f^\lambda s_\lambda(1^\ell)}{s_\lambda(1^r)}s_\lambda(Y),
\end{equation}
where $f^\lambda$ is the dimension of the irreducible $S_k$-module indexed by $\lambda$.
The Frobenius formula expresses $s_\lambda(Y)$ as a polynomial in the power sums $z_j$, so
\eqref{eq:projection-schur} is an exact rational change of coordinates.  In particular,
\[
 M_{1,\ell}=\frac{\ell}{r}z_1,
\]
\[
 M_{2,\ell}=
 \frac{\ell(\ell r-1)}{r(r-1)(r+1)}z_1^2
 +\frac{\ell(r-\ell)}{r(r-1)(r+1)}z_2.
\]

For the index density, put
\[
       \ell_j=-\frac{B_{2j}}{2j(2j)!}
\]
and define
\begin{equation}\label{eq:C-Lj}
 L_j=2\ell_j\left(n+1-2^{2j-1}
 +\sum_{v=1}^j(-1)^v\binom{2j}{2v}p_v\right).
\end{equation}
Set $A_0=1$ and
\begin{equation}\label{eq:C-Aj}
       A_j=\frac1j\sum_{v=1}^j vL_vA_{j-v}.
\end{equation}
If $r_0=\lfloor n/2\rfloor$ and $\epsilon=n\bmod2$, expand
\begin{equation}\label{eq:C-Wexp}
 (2\sinh h)^{2r_0+2}(2\cosh h)^\epsilon
 =\sum_{q\ge0}w_qh^{2q}=\sum_{q\ge0}w_qu^q.
\end{equation}
Then the reduced characteristic number is
\begin{equation}\label{eq:C-master-recurrence}
 \boxed{\qquad
        K_n^{\mathrm{red}}(p)=\sum_{j=0}^n w_{n-j}A_j(p).
 \qquad}
\end{equation}
A reduced monomial $p_\mu$ of Weyl weight $|\mu|$ represents
$p_\mu u^{n-|\mu|}$ in top degree.  Thus
\eqref{eq:C-Lj}--\eqref{eq:C-master-recurrence}, followed by the exact change of coordinates
\eqref{eq:projection-schur}, reproduce every calculation below using rational arithmetic only.

\subsection{Dimensions five through eight: projection moments}
\label{app:reconstruction-5-8}

For $n=5$ the recurrence gives
\[
 K_5^{\mathrm{red}}
 =72+\frac{536}{45}p_1+\frac49p_1^2+\frac4{45}p_2.
\]
Here $r=12$ and
\[
 M_{1,12}=2p_1,\qquad M_{2,12}=4p_1^2,\qquad
 M_{2,1}=\frac{4p_1^2+2p_2}{12\cdot13}.
\]
Substitution gives
\[
 K_5=72u^5+\frac{268}{45}M_{1,12}u^4
 +\frac1{15}M_{2,12}u^3+\frac{104}{15}M_{2,1}u^3,
\]
which after integration is exactly \eqref{eq:n5mu}.

For $n=6$ the same recurrence gives
\[
 K_6^{\mathrm{red}}
 =96+\frac{832}{45}p_1+\frac89p_1^2+\frac8{45}p_2.
\]
Now $r=14$ and
\[
 M_{1,14}=2p_1,\qquad M_{2,14}=4p_1^2,\qquad
 M_{2,1}=\frac{4p_1^2+2p_2}{14\cdot15},
\]
so
\[
 K_6=96u^6+\frac{416}{45}M_{1,14}u^5
 +\frac2{15}M_{2,14}u^4+\frac{56}{3}M_{2,1}u^4,
\]
and integration gives \eqref{eq:n6mu}.

The first cubic terms occur for $n=7,8$.  The recurrence gives
\begin{align*}
 K_7^{\mathrm{red}}={}&128+\frac{208}{7}p_1
 +\frac{8(259p_1^2+46p_2)}{945}
 +\frac{4(35p_1^3+21p_1p_2+4p_3)}{2835},\\
 K_8^{\mathrm{red}}={}&160+\frac{12896}{315}p_1
 +\frac{8(413p_1^2+71p_2)}{945}
 +\frac{8(35p_1^3+21p_1p_2+4p_3)}{2835}.
\end{align*}
For $r=16$ the three cubic moments used are
\[
 M_{3,16}=8p_1^3,\quad
 M_{3,1}=\frac{2p_1^3+3p_1p_2+p_3}{1224},\quad
 M_{3,2}=\frac{22p_1^3+15p_1p_2+2p_3}{1530},
\]
and for $r=18$ they are
\[
 M_{3,18}=8p_1^3,\quad
 M_{3,1}=\frac{2p_1^3+3p_1p_2+p_3}{1710},\quad
 M_{3,2}=\frac{2(74p_1^3+51p_1p_2+7p_3)}{14535}.
\]
Together with the quadratic formulas above, direct coefficient comparison gives
\begin{align*}
1890K_7={}&241920u^7+28080M_{1,16}u^6+668M_{2,16}u^5
 +100096M_{2,1}u^5\\
&+3M_{3,16}u^4+6528M_{3,1}u^4+4080M_{3,2}u^4,\\
4725K_8={}&756000u^8+96720M_{1,18}u^7+2710M_{2,18}u^6
 +485640M_{2,1}u^6\\
&+15M_{3,18}u^5+43320M_{3,1}u^5+29070M_{3,2}u^5.
\end{align*}
These are exactly \eqref{eq:n7mu} and \eqref{eq:n8mu} after integration.

At $n=9$ the recurrence produces a Weyl-weight-four component for the first time.  The
projection certificates just displayed therefore cease to give the required decomposition; the
next stage recognizes the new quartic term inside the larger PSD/Gaussian cone.

\subsection{Dimensions nine and ten: the Weyl quartic term}
\label{app:reconstruction-9-10}

For $n=9$ the recurrence gives
\begin{align}\label{eq:C-K9red}
 K_9^{\mathrm{red}}={}&200+\frac{90256}{1575}p_1
 +\frac{8(10634p_1^2+1641p_2)}{14175}\notag\\
 &+\frac{4(945p_1^3+509p_1p_2+78p_3)}{14175}\notag\\
 &+\frac{175p_1^4+210p_1^2p_2+21p_2^2+80p_1p_3+18p_4}{42525}.
\end{align}
For $r=20$,
\[
 M_{3,20}=8p_1^3,\quad
 M_{3,1}=\frac{2p_1^3+3p_1p_2+p_3}{2310},\quad
 M_{3,2}=\frac{2(82p_1^3+57p_1p_2+8p_3)}{21945}.
\]
Using these and the quadratic formulas gives the first seven terms in \eqref{eq:n9} exactly.
The generating series \eqref{eq:Fseries} gives
\begin{equation}\label{eq:C-F4p}
 F_4=
 \frac{175p_1^4+210p_1^2p_2+21p_2^2+80p_1p_3+18p_4}{87091200}.
\end{equation}
Since $87091200/42525=2^{11}$, the quartic line is $2^{11}F_4$.  This proves the exact
identity \eqref{eq:n9}.

For $n=10$ one obtains similarly
\begin{align*}
 K_{10}^{\mathrm{red}}={}&240+\frac{16576}{225}p_1
 +\frac{8(15073p_1^2+2217p_2)}{14175}\\
 &+\frac{16(385p_1^3+202p_1p_2+29p_3)}{14175}\\
 &+\frac{2(175p_1^4+210p_1^2p_2+21p_2^2+80p_1p_3+18p_4)}{42525}.
\end{align*}
For $r=22$,
\[
 M_{3,22}=8p_1^3,\quad
 M_{3,1}=\frac{2p_1^3+3p_1p_2+p_3}{3036},\quad
 M_{3,2}=\frac{10p_1^3+7p_1p_2+p_3}{1771}.
\]
The first three positive weights therefore give exactly the projection coefficients in
\eqref{eq:n10}, while \eqref{eq:C-F4p} identifies the last line as $2^{12}F_4$.  This is the
complete exact reconstruction of \eqref{eq:n10}.

This method cannot continue to $n=11$: Appendix~\ref{app:obstruction-psd} gives an exact
separating functional showing that the quartic targets in dimensions eleven and twelve lie
outside the entire PSD matrix-moment cone, not merely outside the particular finite family used
above.

\subsection{Dimensions eleven and twelve: scalar orbital moments}
\label{app:reconstruction-11-12}

The recurrence gives
\begin{align*}
K_{11}^{\mathrm{red}}={}&288+\frac{4965304}{51975}p_1
+\frac{152(7689p_1^2+1018p_2)}{93555}\\
&+\frac{4(92873p_1^3+44143p_1p_2+4712p_3)}{467775}
+Q_{11,4}+2^{13}F_5,\\
K_{12}^{\mathrm{red}}={}&336+\frac{6101552}{51975}p_1
+\frac{8(963501p_1^2+120259p_2)}{467775}\\
&+\frac{8(67463p_1^3+30811p_1p_2+2798p_3)}{467775}
+Q_{12,4}+2^{14}F_5,
\end{align*}
where
\[
Q_{11,4}=\frac{4(8470p_1^4+9207p_1^2p_2+825p_2^2+2882p_1p_3+450p_4)}{1403325},
\]
\[
Q_{12,4}=\frac{2(28105p_1^4+29898p_1^2p_2+2607p_2^2+8888p_1p_3+1206p_4)}{1403325},
\]
and
\[
 F_5=\frac{385p_1^5+770p_1^3p_2+440p_1^2p_3+231p_1p_2^2
 +198p_1p_4+88p_2p_3+48p_5}{11496038400}.
\]
The lower weights give the projection coefficients printed in \eqref{eq:n11} and
\eqref{eq:n12}; this is checked directly from \eqref{eq:projection-schur} exactly as in
Appendix~\ref{app:reconstruction-5-8}.

For the genuinely new quartic calculation, order the partitions as
\[
(4),(3,1),(2,2),(2,1,1),(1,1,1,1).
\]
In dimension eleven, rewriting $Q_{11,4}$ in the Schur basis gives
\begin{align*}
Q_{11,4}={}&\frac{9704}{155925}s_{(4)}+\frac{44456}{467775}s_{(3,1)}
+\frac{272}{6075}s_{(2,2)}\\
&+\frac{21104}{467775}s_{(2,1,1)}+\frac{32}{4455}s_{(1,1,1,1)}.
\end{align*}
For the five spectra in \eqref{eq:Q11orb}, the matrix of Schur values is
\[
S_{11}=\begin{pmatrix}
5&57&1510&23130&28415\\
3&47&1145&17910&26985\\
1&17&370&5616&9625\\
0&8&285&5418&9990\\
0&0&21&630&1410
\end{pmatrix}.
\]
With
\[
 t_{11}=\left(42071692,33230860,\frac{304682840}{27},
 \frac{23056120}{3},\frac{1664096}{3}\right)^T,
\]
the coefficient comparison is the single exact system $S_{11}c=t_{11}$.  Its solution is
\[
 c=\left(
 \frac{1008851484437}{2142693},
 \frac{151204920425}{714231},
 \frac{10770035384}{714231},
 \frac{51654964}{6428079},
 \frac{252882056}{1530495}
 \right)^T,
\]
which is exactly \eqref{eq:Q11orb}.

For dimension twelve the corresponding Schur expansion is
\begin{align*}
Q_{12,4}={}&\frac{15712}{155925}s_{(4)}+\frac{2944}{18711}s_{(3,1)}
+\frac{3184}{42525}s_{(2,2)}\\
&+\frac{35344}{467775}s_{(2,1,1)}+\frac{1888}{155925}s_{(1,1,1,1)}.
\end{align*}
For the five spectra in \eqref{eq:Q12orb}, the matrix of Schur values is
\[
S_{12}=\begin{pmatrix}
5&15&57&23130&25645\\
3&15&47&17910&22140\\
1&6&17&5616&7440\\
0&3&8&5418&7470\\
0&0&0&630&966
\end{pmatrix}.
\]
The five factorial factors in \eqref{eq:orbital-def} are
\[
\frac1{318073392000},\quad
\frac1{173059286400},\quad
\frac1{127876320000},\quad
\frac1{89513424000},\quad
\frac1{43609104000},
\]
so the right-hand side is
\[
 t_{12}=\left(
 \frac{1377196080}{11},\frac{319092800}{3},\frac{261804400}{7},
 26419640,2062640\right)^T.
\]
Solving $S_{12}c=t_{12}$ over $\mathbb Q$ gives
\[
 c=\left(
 \frac{2298362246}{9933},
 \frac{194400388184}{148995},
 \frac{7327263802}{9933},
 \frac{178334456}{148995},
 \frac{4485216}{3311}
 \right)^T,
\]
which is exactly \eqref{eq:Q12orb}.  Thus every coefficient in the dimension-eleven and
-twelve identities is reproduced by an explicit rational linear system.

The scalar-orbital cone is still insufficient at the next step: the separator in
Appendix~\ref{app:obstruction-orbital} is nonnegative on every scalar-orbital quartic but negative on
the quartic targets for $n=13,14$.  This is why Section~\ref{sec:hodge-extension} introduces
integrated Hodge squares.

\subsection{Dimensions thirteen and fourteen: Hodge-square certificates}
\label{app:reconstruction-13-14}

\subsubsection{Applying the recurrence}
\label{app:recurrence-13-14}

For dimensions 13 and 14, the general approach remains the same, but the coefficients become slightly unwieldy. We therefore split the exposition into two parts. On the one hand, we explicitly give the target polynomials here; for the chosen decompositions, we only give their general shape here and refer the reader to our GitHub repository (\url{\CertificateRepository}) for the exact values. The repository also provides Python programs that verify that the decompositions given produce their respective targets.

The recurrence remains the same.  It gives
\[
K_{13}^{\mathrm{red}}=392+\sum_{k=1}^6P_{13,k},
\]
with
\begin{align*}
P_{13,1}&=\frac{45727288}{315315}p_1,\\
P_{13,2}&=\frac{4(393270683p_1^2+44093313p_2)}{70945875},\\
P_{13,3}&=\frac{4(850330481p_1^3+352799421p_1p_2+18194770p_3)}{1915538625},\\
P_{13,4}&=\frac{1}{638512875}\bigl(49604555p_1^4+48174126p_1^2p_2+3778489p_2^2\\
&\hspace{22mm}+11093784p_1p_3+293502p_4\bigr),\\
P_{13,5}&=\frac{1}{1915538625}\bigl(3328325p_1^5+6076070p_1^3p_2+2902900p_1^2p_3\\
&\hspace{22mm}+1648647p_1p_2^2+945126p_1p_4+514228p_2p_3+115632p_5\bigr),\\
P_{13,6}&=\frac{1}{11493231750}\bigl(175175p_1^6+525525p_1^4p_2\\
&\hspace{22mm}+400400p_1^3p_3+315315p_1^2p_2^2\\
&\hspace{22mm}+270270p_1^2p_4+240240p_1p_2p_3+131040p_1p_5\\
&\hspace{22mm}+21021p_2^3+54054p_2p_4+22880p_3^2+33168p_6\bigr).
\end{align*}

Thus only weights $0,\ldots,6$ occur, as used in Section~\ref{subsec:hodge-thirteen}.

For the Hodge factor \eqref{eq:a13}, write
\[
a=\alpha u^2p_1+\beta p_3+\gamma p_1p_2+\delta p_1^3.
\]
Then
\[
a^2u^7=\alpha^2p_1^2u^{11}
+2\alpha p_1(\beta p_3+\gamma p_1p_2+\delta p_1^3)u^9
+(\beta p_3+\gamma p_1p_2+\delta p_1^3)^2u^7,
\]
so the exact residuals to be decomposed are $P_{13,2}$, $P_{13,4}$ and $P_{13,6}$ minus the
weight-$2$, weight-$4$ and weight-$6$ contributions stated in Section~\ref{subsec:hodge-thirteen}.

The reduced polynomial is matched by one Hodge square and twenty-nine scalar orbital
moments.  Split the factor \eqref{eq:a13} by Weyl weight as $a=u^2f_{13,1}+f_{13,3}$, where
\begin{align*}
f_{13,1}&=\frac{999999993}{10^9}p_1,\\
f_{13,3}&=\frac1{10^9}\bigl(29552\,p_3+83790\,p_1p_2+74653\,p_1^3\bigr).
\end{align*}
The square $a^2$ meets weights $2$, $4$ and $6$ only.  With coefficient $18$ on the square,
the identity in the reduced coordinates is
\begin{equation}\label{eq:K13-certificate}
K_{13}^{\mathrm{red}}
=392+\sum_{k=1}^6\sum_j c_{13,k,j}\,O_k^{(13)}(a_{13,k,j};p)
+18\bigl(f_{13,1}+f_{13,3}\bigr)^2.
\end{equation}
Every coefficient $c_{13,k,j}$ is positive.  There is one term of weight $1$, two of
weight $2$, three of weight $3$, five of weight $4$, seven of weight $5$, and eleven of
weight $6$.  Weights $1$, $3$ and $5$ receive nothing from the square.  The spectra and the
coefficients themselves are the witness recorded in
Appendix~\ref{app:recorded-witnesses}.
Restoring the factor $u^{13-k}$ on each weight-$k$ summand gives
\begin{equation}\label{eq:K13-restored}
K_{13}
=392u^{13}
+18a^2u^7
+\sum_{k=1}^6\sum_j c_{13,k,j}\,O_k^{(13)}(a_{13,k,j};p)\,u^{13-k}.
\end{equation}
The constant $392$ is the value of $K_{13}^{\mathrm{red}}$ at $p=0$.  Each orbital integrand is
nonnegative by \eqref{eq:orbital-def positivity}, and $\int_Ma^2u^7\ge0$, while $\int_Mu^{13}>0$.
Therefore
\[
d-14=\int_MK_{13}>0,
\]
so $d\ge15$.

For $n=14$ the recurrence gives
\[
K_{14}^{\mathrm{red}}=448+\sum_{k=1}^6P_{14,k},
\]
where
\begin{align*}
P_{14,1}&=\frac{816577408}{4729725}p_1,\\
P_{14,2}&=\frac{16(370709547p_1^2+38781047p_2)}{212837625},\\
P_{14,3}&=\frac{64(71758687p_1^3+28328547p_1p_2+842120p_3)}{1915538625},\\
P_{14,4}&=\frac{8}{638512875}\bigl(9204195p_1^4+8642634p_1^2p_2+648076p_2^2\\
&\hspace{22mm}+1762436p_1p_3-63807p_4\bigr),\\
P_{14,5}&=\frac{8}{1915538625}\bigl(700700p_1^5+1256255p_1^3p_2
+575575p_1^2p_3+333333p_1p_2^2\\
&\hspace{22mm}+168714p_1p_4+98527p_2p_3+12528p_5\bigr),\\
P_{14,6}&=\frac{1}{5746615875}\bigl(175175p_1^6+525525p_1^4p_2\\
&\hspace{22mm}+400400p_1^3p_3+315315p_1^2p_2^2\\
&\hspace{22mm}+270270p_1^2p_4+240240p_1p_2p_3+131040p_1p_5\\
&\hspace{22mm}+21021p_2^3+54054p_2p_4+22880p_3^2+33168p_6\bigr).
\end{align*}

We again obtain a decomposition into one square, together with twenty-nine scalar orbital moments.  Set
\begin{align*}
f_{14,1}&=p_1,\\
f_{14,3}&=\frac1{10^9}\bigl(88656\,p_3+251370\,p_1p_2+223959\,p_1^3\bigr),
\end{align*}
and write $b=u^2f_{14,1}+f_{14,3}$.  Then $b\in H^{12}(M,\mathbb R)$, so $\int_Mb^2u^8\ge0$.
In reduced coordinates,
\begin{equation}\label{eq:K14-certificate}
K_{14}^{\mathrm{red}}
=448+\sum_{k=1}^6\sum_j c_{14,k,j}\,O_k^{(14)}(a_{14,k,j};p)
+20\bigl(f_{14,1}+f_{14,3}\bigr)^2.
\end{equation}

The twenty-nine coefficients $c_{14,k,j}$ are positive, with the same weight counts
$1,2,3,5,7,11$.  The square contributes only to weights $2$, $4$ and $6$.  The spectra and
the coefficients are the second witness of Appendix~\ref{app:recorded-witnesses}.  Restoring
powers of $u$ gives

\begin{equation}\label{eq:K14-restored}
K_{14}
=448u^{14}
+20b^2u^8
+\sum_{k=1}^6\sum_j c_{14,k,j}\,O_k^{(14)}(a_{14,k,j};p)\,u^{14-k}.
\end{equation}
Since $448>0$ and $\int_Mu^{14}>0$,
\[
d-17=\int_MK_{14}>0,
\]
and therefore $d\ge18$.

In the basis $v$ of Section~\ref{sec:hodge-extension}, the factor $b$ is the linear form
$w^Tv$ with
\begin{equation}\label{eq:w14}
w=
\begin{pmatrix}
0&1&0&0&88656\cdot10^{-9}&251370\cdot10^{-9}&223959\cdot10^{-9}
\end{pmatrix}^{T}.
\end{equation}
Thus $20b^2=v^TGv$ for the rank-one Gram matrix
\begin{equation}\label{eq:G14}
G=20\,ww^T.
\end{equation}
This $G$ is positive semidefinite, and the constant term that remains is the full value
$448$ of the reduced density.  The identity above is the rational equality checked by
Algorithm~\ref{alg:verify-witness}.

These two identities complete the argument through quaternionic dimension fourteen.  No
exact positive certificate in dimension fifteen is claimed here.

\subsubsection{The recorded decompositions}
\label{app:recorded-witnesses}

Each of \eqref{eq:K13-certificate} and \eqref{eq:K14-certificate} is a sum of one positive
scalar, one positive multiple of a square, and twenty-nine positive scalar orbital moments.
The orbital terms are distributed by Weyl weight as $1,2,3,5,7,11$.  Each spectrum is represented in our supplementary data files as a
finite list of nonnegative integers, of length at most $n$ and not identically zero; a
missing coordinate means zero.  Equality of the two sides, once powers of $u$ are restored,
is a comparison of thirty monomials in $u,p_1,\ldots,p_6$.  In both dimensions, both numerators and denominators in the chosen decomposition get quite long, reaching on the order of sixty digits.

The exact lists are the files \texttt{h2\_witness\_n13.json} and
\texttt{h2\_witness\_n14.json} at
\begin{center}
\url{\CertificateRepository}.
\end{center}

Each file records the scalar, the coefficient and Weyl weight of the square, the monomials
of its factor, and, for every orbital term, the weight, the spectrum, and the rational
coefficient.  The program \texttt{verify\_h2.py} in that repository reads one such file and
runs Algorithm~\ref{alg:verify-witness}.  The reduced density and the orbital polynomials are recomputed
from \eqref{eq:C-master-recurrence} and \eqref{eq:orbital-def}, rather than read from the
file.

\begin{algorithm}[ht]
\caption{Exact comparison of a supplied dimension-$13$ or~$14$ witness with the reduced index density.}
\label{alg:verify-witness}
\begin{algorithmic}[1]
\Require $n\in\{13,14\}$.  From $W$, read a scalar $c>0$, one square given by a
coefficient $\lambda>0$, a Weyl weight $m$, and a factor $f$, together with orbital
terms $(k,a,c_{k,a})$.
\Require $2m\le n$, every $c_{k,a}>0$, every weight $1\le k\le n$, and every spectrum
$a=(a_1,\ldots,a_\ell)$ with $1\le\ell\le n$, $a_i\ge0$, and $a\ne0$.
\State Build $f$ by giving each stored monomial $p_{j_1}\cdots p_{j_r}$ the factor
$u^{m-(j_1+\cdots+j_r)}$.
\State Let $D$ be the $u$-homogenization of the reduced density produced by
\eqref{eq:C-master-recurrence}, and let $O_k^{(n)}(a;p)$ be \eqref{eq:orbital-def}.  Form
\[
T
=c\,u^n
+\lambda f^2 u^{n-2m}
+\sum c_{k,a}\,O_k^{(n)}(a;p)\,u^{n-k}.
\]
\Ensure $T=D$ as polynomials over $\mathbb Q$.  In every monomial, the power of $u$
plus the Weyl weight equals $n$.
\end{algorithmic}
\end{algorithm}

The comparison is an equality of rational polynomials, carried out in exact arithmetic. Of course, the Python scripts do nothing to verify the underlying geometric reasoning.

\subsection{Compact reproduction recipe}
\label{app:reconstruction-recipe}
For reference, the complete procedure is:
\begin{enumerate}[label=\arabic*.]
\item Compute $L_1,\ldots,L_n$ from \eqref{eq:C-Lj} and then $A_0,\ldots,A_n$ from
      \eqref{eq:C-Aj}.
\item Expand \eqref{eq:C-Wexp} through $h^{2n}$ and form $K_n^{\mathrm{red}}$ from
      \eqref{eq:C-master-recurrence}.
\item Group by Weyl weight and restore powers of $u$ only after coefficient comparison.
\item For $5\le n\le8$, use \eqref{eq:projection-schur} to solve in projection moments.
\item For $n=9,10$, use the same lower-weight calculation and identify the quartic term by
      \eqref{eq:C-F4p}.
\item For $n=11,12$, solve the displayed five-dimensional scalar-orbital systems.
\item For $n=13,14$, load the corresponding witness of
      Appendix~\ref{app:recorded-witnesses} and run Algorithm~\ref{alg:verify-witness}.
\end{enumerate}

All operations are finite exact rational calculations; no numerical approximation is ever used in
verification. Implementation is in pure Python and requires no external libraries. The code was tested under \texttt{Python 3.12.0}.

\section{Lean formalization}
\label{app:lean}

\paragraph{Goals and Overview} A mathematical proof serves two different purposes: on the one hand, it is meant to convey mathematical ideas and teach new techniques; on the other hand, it serves to certify the correctness of some mathematical claims. A study by Lamport \cite{lamport1995write} based on data by Bergman suggests that a significant fraction of mathematical proofs published in good journals contain significant gaps (which obviously need not stop them from being very successful at conveying important ideas). Therefore, we wanted to use all means available to make sure that our proofs do not succeed only in their function of transmitting knowledge, but that they also are without such gaps. 

Hence, as an additional correctness check and to support rigorous proofreading of the manuscript, we developed a partial formalization in Lean~\cite{deMouraUllrich2021} of our main result. At the time of writing, \texttt{mathlib} support for differential and algebraic geometry is still quite incomplete \cite{YuanEtAl2026}. It quickly became clear that for this reason, a complete formalization of our result (i.e. one that would only rely on results already proven in \texttt{mathlib}) would not be feasible with the resources at our disposal. As a consequence, the following decisions were made with regard to the formalization project:

\begin{enumerate}
\item The minimal goal of the formalization effort is a verification of the main result for quaternionic dimensions 1--14 against \texttt{mathlib} and a set of literature inputs comparable to those used by the present paper. 
\item The formal verification is intended to provide significant additional assurance that the arguments presented are correct beyond what the normal process of authors reviewing their own work during drafting of a submitted manuscript achieves. It is not intended to replace human review for correctness or indeed author self-review.
\item Literature inputs must enter the formalization as hypotheses to the final theorem. Compilation of the development must succeed against the three standard Lean axioms (\texttt{propext}, \texttt{Classical.choice} and \texttt{Quot.sound}).
\item The main proof target is the \emph{symmetry} statement for all complete positive quaternionic K\"ahler manifolds in quaternionic dimensions 1--14 underlying the main classification result. That means that the subsequent identification with the list of Wolf spaces \cite{Wolf1965} is outside the scope of formalization. This saves some effort, as it means that we do not have to formally construct the Wolf spaces. The price is that the formalization contains no formal example of a manifold satisfying its geometric hypotheses (see the discussion of vacuity below).
\item The project is organized in two stages, corresponding to a formal verification of the main calculations from black-boxed algebraic identities in the first stage and verification to the level described above in the second stage. The repository we published on GitHub is the endpoint of the stage 2 effort.
\item The main goal of the formalization effort is to certify the symmetry statement underlying the main classification result, \emph{not} all details of the proof published in the present paper. The rationale for this decision is that this gains us flexibility to pick a path through the proof that lends itself to formalization given the infrastructure supplied by \texttt{mathlib} and, incrementally, by the project itself; that Lean verifiers do not offer a way to \emph{verify} that the formalized \emph{proof} corresponds to the \emph{proof} presented in the present paper anyway; and the manuscript was not quite in its final form when the formalization was started.
\item Literature inputs have to be accepted facts grounded in specific papers in the literature, but the derivation from the literature does not have to be direct extraction of a quote, but may involve several ingredients via a short natural-language proof. The effect of this is that we can use results that a specialist would be able to derive from a given paper and which can be easily stated in the formalization but which may be hard to \emph{formally} prove because their formalization would require additional infrastructure not currently available in \texttt{mathlib}.
\end{enumerate}

The project produced a codebase spanning $\approx 240000$ non-empty lines of Lean code (as counted by \texttt{scc}) and verifies using only the standard axioms in Lean.

\paragraph{Assessment of reliability} The symmetry theorems currently depend on 16 literature inputs for dimensions up to 12 and on 17 literature inputs for dimensions 13 and 14. One of these, a result cited from Derdzinski and due to Hitchin (Theorem 33.4 in \cite{Derdzinski2000}), is only used in quaternionic dimension one. Manually checking the literature inputs for apparent match with the intended semantics is tedious but manageable. It does in some cases involve evaluating mathematical arguments, as not all literature sources are direct quotes from the literature. Some are relatively immediate deductions.

The same is not true, however, for object definitions that were needed on top of \texttt{mathlib} to define the object types that appear in the literature inputs or the target theorems. In total, there are around 500 of these definitions in the current codebase. We expect most of these to be routine for a reviewer who is used to reading Lean code, but not all of them are. Based on a review of a sample of 50 declarations by GPT-6-Astra, we roughly estimate that around 40--50 of the declarations would take significant work to verify (in the sample of 50, Astra classified seven as \emph{needing deep review}).

\paragraph{Vacuity} The most important failure mode that successful compilation cannot exclude is vacuity of the main theorems. This can happen in two ways. First, the definitions of the geometric objects could be unsatisfiable, so that the main theorems would apply to no manifold at all. Definitions cannot make Lean inconsistent, but they can be empty or mean something other than intended. Second, the literature inputs could be false as formalized; the main theorems would then hold vacuously, because their hypotheses can never be supplied. The first problem could be excluded by formally constructing examples of manifolds satisfying the geometric hypotheses, such as the Wolf spaces; this has not been done. The second can only be addressed by formally proving the literature inputs or by checking them against examples. The evidence against both forms of vacuity is currently indirect.

We therefore relied on the following mechanisms for checking the correctness of formalization, in particular against vacuity:

\begin{enumerate}
\item During development, the agents that wrote the Lean formalizations ran regular audits of newly written code that also checked for semantic problems.
\item At the end of the initial development phase, the resulting codebase required 48 literature sources. We ran an adversarial audit against them by letting a GPT-6-Astra agent review the codebase for several hours with the goal of proving a stronger statement than we believed reachable (LeBrun--Salamon in quaternionic dimension 15) or to find a counterexample against any of the literature sources, thereby showing inconsistency in the Lean codebase. The agent did not succeed in proving the desired higher-dimension proposition, but did find that two literature sources contained wrong claims, although it would not have been easy to formally derive a contradiction in Lean. One of the problems came from a statement in the literature that was correctly formalized but wrong, while the other came from a mismatch between a formalized statement and the literature source. Both problems were quickly patched.
\item We then eliminated literature sources one by one by formally proving the sources in Lean. This showed that at least the eliminated sources could be derived from \texttt{mathlib} and the remaining sources using reasoning that was natural enough for the GPT-6-Astra agent that ran the test to not flag a problem. We would not expect both of these conditions to be true if the \emph{eliminated} sources were wrong or wrongly formalized. Formal proof alone only shows that a source is true \emph{as formalized}, not that it matches its counterpart in the literature.
\item In a similar vein, the formalized proof of our main result is modeled on natural-language proofs of the main result. We would expect divergence of the formalized argument from the natural-language argument if the underlying concepts were wrongly formalized. This is different from blind deduction in a formal theory, where an inconsistent system of axioms proves everything, and where nobody checks whether the resulting proofs make sense semantically.
\item We checked the formal statements of the main results and spot-checked the formalizations of the literature sources and found no further problems manually.
\item Automated comparison of an almost-final version of our manuscript against the formalization also uncovered small errors in the manuscript. We would not expect this to be the case if the formalized theory was true vacuously.
\end{enumerate}

In sum, we think it is possible that our formalization contains errors, but several factors would have to conspire for the formal argument to go through in the way it does and the problems would have to be subtle enough that the language model does not find it; essentially, the problems have to be in the remaining literature inputs or definitions and they would have to still allow a massive body of normal-looking mathematical deduction to be carried out using the wrong definitions. We think the order of likelihood of this is not higher than the baseline probability that \emph{we} are wrong about the informal proof.

If there are remaining errors, we expect that they can likely be fixed, for similar general reasons as we expect that any remaining gaps in our informal exposition will be fixable.

Our overall assessment is that the formalization gives strong evidence that partly complements but does not replace our own mathematical analysis that the arguments we add on top of the literature yield the desired main results. However, even granting that the formalization itself is correct, it does not establish all intermediate results used in our proof (because it follows the same overall proof strategy but sometimes proves weaker or different results en route). Maybe most notably, the formalization does not establish the bound on the rank of the isometry group of $M$ for a minimal counterexample in all dimensions, because it does not need it.

We intend to develop the formalization further, although possibly slowly and at low priority. In that context, we consider the formal construction of the Wolf spaces up to quaternionic dimension 14 one of the most important goals, more important than further reducing the number of literature inputs, because it would exclude the first form of vacuity discussed above.

We make the Lean repository that formalizes our results available on GitHub at \url{\CertificateRepository}.

\end{document}